\documentclass[a4paper,fleqn, 12pt]{cas-sc}

\usepackage[numbers,sort&compress]{natbib}
\def\tsc#1{\csdef{#1}{\textsc{\lowercase{#1}}\xspace}}
\tsc{WGM}
\tsc{QE}
\tsc{EP}
\tsc{PMS}
\tsc{BEC}
\tsc{DE}
\usepackage{framed,multirow}
\usepackage{float} 
\usepackage{hyperref}

\usepackage{mathtools}
\usepackage{amssymb}
\usepackage{amsmath,amsthm}
\usepackage{mathrsfs}
\usepackage{bm} 
\newtheorem{theorem}{Theorem}
\newtheorem{lemma}{Lemma}

\newtheorem{assumption}{Assumption}
\usepackage{mleftright}
\mleftright 
\newcommand{\mathd}{\mathrm{d}}
\newcommand{\diff}{\,\mathrm{d}}

\newcommand{\LaF}{\widehat{\mathscr{L}}}
\newcommand{\LaP}{{\mathscr{L}}}
\usepackage{url}
\usepackage{xcolor}
\usepackage{subcaption}
\usepackage{float}
\definecolor{newcolor}{rgb}{.8,.349,.1}
\usepackage{algorithm}      
\usepackage{algpseudocode}  

\numberwithin{table}{section}
\begin{document}
\let\WriteBookmarks\relax
    \def\floatpagepagefraction{1}
    \def\textpagefraction{.001}
\shorttitle{Laguerre Minimum Action Method}
\shortauthors{S. Huang, Y. Yue and H. Yu}


\title[mode = title]{
An Efficient Laguerre Minimum Action Method for Computing Quasi-Potentials
}

\author[U,I,C]{Shenghe Huang}
\fnmark[1]
\ead{shenghe.huang@lsec.cc.ac.cn}

\author[U,I]{Yishuang Yue}
\fnmark[1]
\ead{yueyishuang21@mails.ucas.ac.cn}

\fntext[fn1]{These authors contributed equally to this work.}
\author[I,U]{Haijun Yu}[orcid=0000-0002-5742-0327]
\cormark[1] 
\cortext[cor1]{Corresponding author. \ead{hyu@lsec.cc.ac.cn}}

\affiliation[I]{organization={State Key Laboratory of Mathematical Sciences (SKLMS) \& LSEC, Institute of Computational Mathematics and Scientific/Engineering Computing, Academy of Mathematics and Systems Science},
                city={Beijing},
                postcode={100190},
                country={China}}

\affiliation[U]{organization={School of Mathematical Sciences, University of Chinese Academy of Sciences},
                city={Beijing},
                postcode={10049},
                country={China}}
\affiliation[C]{organization={Present Address: CSSC SEAGO System Technology Co., Ltd},
                city={Shanghai},
                postcode={200010},
                country={China}}

\begin{abstract}
Minimum action methods provide a powerful framework for analyzing rare transitions in small-noise-driven dynamical systems, but their practical performance is often limited by time truncation and
parameter sensitivity in infinite-horizon problems.
In this paper, we develop an efficient Laguerre spectral minimum action method (LMAM) for computing quasi-potentials associated with fixed points of dynamical systems.
Based on the large deviation framework, the method computes minimum action paths by
formulating the problem on a semi-infinite time interval and discretize the temporal direction using Laguerre functions.
An appropriate time rescaling strategy is proposed to enhance accuracy and convergence of the Laguerre spectral approximation.
To efficiently handle nonlinear terms, we employ an improved procedure for evaluating Laguerre--Gauss--Radau quadrature, which enables stable and accurate double-precision computations with a large number of Laguerre modes.
Precise numerical analysis for the linear problem and a local result for the nonlinear case are developed. Numerical experiments including both ordinary and partial differential equations (Allen-Cahn and Navier-Stokes) are presented to illustrate the accuracy and efficiency of the proposed method.
\end{abstract}


\begin{keywords}
    Minimum action method, Laguerre spectral method, scaling factor, transition path computation, quasi-potential, 
    Navier--Stokes equations, Allen--Cahn equation.
\end{keywords}


\maketitle

\section{Introduction}

Many physical, chemical, and biological systems can be modeled as dynamical systems subject to small random perturbations. Although the amplitude of such stochastic forcing is typically weak, it may induce rare but crucial events over long-time evolution. Typical examples include nucleation processes in chemical reactions, transitions between multiple stable equilibria, escape from stable limit cycles, and instability phenomena in fluid dynamics~\cite{DellaBCC1998TransitionPath, BolhuCDG2002TransitionPath,e2005transition}. Understanding the mechanisms of these rare events, computing the most probable transition paths
and estimating their occurrence probabilities constitute central problems in statistical physics and nonlinear dynamics~\cite{EV2010TransitionpathTheory, GrafkSV2024SharpAsymptotic}.

Freidlin--Wentzell (F--W) large deviation theory \cite{freidlin_random_2012}
provides a rigorous mathematical framework for studying such phenomena. According to this theory, in the limit where the noise intensity $\varepsilon \to 0$, the probability that a stochastic trajectory deviates from the deterministic path is governed by the Freidlin--Wentzell action functional. The minimizer of this functional corresponds to the minimum action path (MAP), and the associated minimum value is referred to as the \emph{quasi-potential}. The quasi-potential not only quantifies the ``cost'' required for a noise-driven system to cross a potential barrier, but also plays a fundamental role in characterizing the stability of non-gradient systems, nonequilibrium stationary distributions, and the geometric structure of phase space.

Despite its importance, the numerical computation of the quasi-potential is highly challenging. 

For gradient systems, a transition path always goes through some saddle point and take the negative direction of the corresponding dynamical trajectory. So the key issue of gradient systems is to calculate the saddle points, for which a lot of efficient numerical methods have been built, including the dimer method (see, e.g. \cite{HenkeJ1999DimerMethod, ZhangDu2012ShrinkingDimer}), the nudged elastic band method~\cite{JonssMJ1998NudgedElastic,HenkeJ2000ImprovedTangent}, the string method~\cite{ERV2002StringMethod, ERV2007SimplifiedImproved, DuZhang2009ConstrainedString, ChengLEZS2010NucleationOrdered,RenV2013ClimbingString}, the gentlest acsend dynamic method~\cite{EZhou2011GentlestAscent, LiLuYang2015GentlestAscent,GuZhou2018SimplifiedGentlest}, and eigenvector-following approach~\cite{GaoLZ2015IterativeMinimization,GaoLZ2016IterativeMinimization, YinWCZZ2020ConstructionPathway, YinJSZZ2021TransitionPathways, SuWZZZ2025ImprovedHighIndex}.

For non-gradient systems, the value of quasi-potential depends on the entire MAP. One have to solve the minimization problem derived from F-W large deviation theory. 
One major difficulty arises from the fact that the  optimization problem is defined over an infinite time interval ($t \in (-\infty, T]$). As a result, traditional numerical methods must carefully balance accuracy and efficiency. The original minimum action method (MAM~\cite{e_minimum_2004}) minimizes the action functional over a truncated finite time interval, which requires prescribing the integration time in advance. However, when the optimal transition time tends to infinity, this approach often suffers from severe resolution issues near critical points. To improve robustness with respect to time parametrization and mesh resolution, several variants have been developed. The adaptive minimum action method (aMAM~\cite{zhou2008adaptive})  introduces a moving-mesh reparametrization to redistribute grid points along the path and mitigate clustering near equilibria. The geometric minimum action method (gMAM \cite{heymann_geometric_2008, VandeH2008GeometricMinimum,grafke2017long}) further reformulates the problem as a minimization of the geometric action on the space of curves, thereby removing explicit dependence on the time parametrization and providing a unified treatment of finite- and infinite-time transitions. However, gMAM changes the form of the action functional, for which, it is not easy to design high-order numerical discretizations. On contrary, Wan et al. introduce a  linear time rescaling in MAM (tMAM~\cite{wan_minimum_2015,wan_convergence_2018}), allowing high-order discretization (hp-MAM~ \cite{wan2018hp}), meanwhile handling both finite- and infinite-time transition paths within a unified framework. 
To address situations in which transition paths involve limit cycles and exhibit extremely long transient behavior, a minimum action method based on local quadratic approximation near the limit cycle was proposed by Lin et al.\cite{lin_quasi-potential_2019}. More recently, inspired by the rapid development of machine learning, a neural-network-based geometric minimum action method, termed deep gMAM, was proposed to address rare-event computations in high-dimensional and spatially extended nonequilibrium systems~\cite{simonnet2023computing}. Its main feature is that the transition path is represented globally by a neural network rather than by explicit local mesh discretization, leading to a flexible mesh-free framework that is particularly suitable for complex path geometries.

Although these developments have significantly improved the numerical treatment of minimum action problems, key challenges remain. 
Most existing methods---including many variants of MAM---still rely on local discretizations such as finite differences, finite elements, or piecewise polynomial approximations, and thus typically deliver only algebraic-order accuracy (see e.g. \cite{wan_convergence_2018,hong2025convergence}). 
As a consequence, resolving long transition paths and slowly varying trajectory segments often requires a large number of degrees of freedom. 
These difficulties become even more pronounced for spatially extended systems, where spatial discretization leads to massive systems and requires tremendous computational power. 
Moreover, even recent mesh-free or learning-based approaches are generally not tailored to exploit the intrinsic semi-infinite-time structure of transition paths issuing from fixed points.

Motivated by these observations, we propose an efficient \emph{Laguerre spectral minimum action method} (LMAM) for computing quasi-potentials and transition paths associated with fixed points of dynamical systems. Our approach employs Laguerre functions as basis functions, which are $L^2$-orthogonal bases for the semi-infinite interval $[0,\infty)$ and therefore naturally suited for describing infinite-time trajectories originating from fixed points. Laguerre spectral method has been used for solving partial differential equations on unbounded domains (see e.g.~\cite{Mavriplis1989LaguerrePolynomials, CoulaFK1990LaguerreSpectral,GuoShen2000LaguerreGalerkinMethod, Shen2000StableEfficient,BoydRB2003PseudospectralMethods, BaoShen2005FourthorderTimesplitting,AzaieSXZ2009LaguerreLegendre,ShenWangYu2016EfficientSpectralelement} ), and time evolutionary problems (see .e.g~\cite{Wahlberg1991SystemIdentification, Mikha1999SpectralLaguerre,Ben-yZ2007NumericalIntegration,GuoWTW2008IntegrationProcesses}), but has not been used for solving transition path problems.

The main contributions of this paper are summarized as follows:
\begin{enumerate}
	\item \textbf{Efficient spectral discretization.}  
	We develop a Galerkin spectral method based on Laguerre functions to discretize the variational problem, transforming it into a system of algebraic equations with spectral accuracy. This approach allows the essential features of infinite-time trajectories to be captured using a relatively small number of basis functions.
	
	\item \textbf{Adaptive scaling factor strategy.}  
	An adaptive strategy for selecting the scaling factor in Laguerre approximation is introduced, which dynamically balances the truncation errors in temporal domain and frequency domain, according to recent theoretical results~\cite{hu2026scaling,huscaling}. This significantly enhances the robustness of the method when applied to systems with disparate characteristic time scales.
	
	\item \textbf{High-accuracy treatment of nonlinear terms.}  
	To address the numerical instability arising from the evaluation of high-order Laguerre functions in double-precision arithmetic, we employ an improved Gauss--Radau quadrature procedure we developed recently~\cite{huang2024improved}. This enables accurate and stable large-scale computations of the nonlinear terms using a pseudo-spectral approach.
	
	\item \textbf{Validation on high-dimensional physical models.}  
	The LMAM is applied to classical benchmark problems, including the Allen--Cahn equation and the two-dimensional Navier--Stokes equations. Numerical experiments demonstrate the high efficiency and accuracy of the proposed method for spatially extended systems.
\end{enumerate}

The remainder of the paper is organized as follows.
Section~\ref{Problem Formulation} reviews the Freidlin--Wentzell large deviation principle and the variational formulation of the minimum action method.
Section~\ref{Numerical Strategies} derives the corresponding Euler--Lagrange (E--L) equation and summarizes the main numerical paradigms.
Section~\ref{Laguerre Spectral Method} introduces the Laguerre spectral discretization and the numerical treatment of both linear and nonlinear terms, and discusses the strategy for selecting the time-scaling factor.
Section~\ref{Spectral Properties} presents the convergence analysis, including the theoretical optimal scaling factor for linear systems and discrete error estimates for nonlinear systems.
Section~\ref{Numerical Experiments} demonstrates the performance of the proposed method on a range of benchmark problems, including linear and nonlinear systems, the Allen--Cahn equation, and the Navier--Stokes equations.
Finally, Section~\ref{conclusion} concludes the paper.

\section{Problem formulation}
\label{Problem Formulation}

We consider dynamical systems subject to small stochastic perturbations, whose evolution is governed by the following It\^o stochastic differential equation (SDE):
\begin{equation}
	\mathrm{d} X^{\varepsilon}
	=
	b(X^{\varepsilon})\,\mathrm{d} t
	+
	\varepsilon\, \sigma(X^{\varepsilon})\,\mathrm{d} W,
	\qquad
	X^{\varepsilon}(t) \in \mathbb{R}^m,
	\label{eq:SDE}
\end{equation}
where $W(t)$ denotes an $n$-dimensional Wiener process, $\sigma(x)$ is a matrix-valued function, and the superscript $\varepsilon$ indicates dependence on the noise intensity parameter $\varepsilon$. Formally, \eqref{eq:SDE} can be written as
\[
\frac{\mathrm{d} X^{\varepsilon}}{\mathrm{d} t}
=
b(X^{\varepsilon})
+
\varepsilon\, \sigma(X^{\varepsilon})
\frac{\mathrm{d} W}{\mathrm{d} t}.
\]
In the limit $\varepsilon \to 0$, the stochastic system converges to the deterministic dynamical system $\dot{x} = b(x)$.

According to Freidlin--Wentzell large deviation theory, over a finite time interval $[t_0, t_0+T]$, the probability that the stochastic trajectory $X_t^{\varepsilon}$ deviates from the deterministic trajectory and follows a prescribed path $\varphi(t)$ is characterized by the action functional $S_T[\varphi]$:
\begin{equation}\label{eq:ActionSt}
    S_T[\varphi]
=
\frac{1}{2}
\int_{t_0}^{t_0+T}\!\!
\sum_{i,j}
a_{ij}(\varphi)
\bigl(\dot{\varphi}^i - b^i(\varphi)\bigr)
\bigl(\dot{\varphi}^j - b^j(\varphi)\bigr)
\,\mathrm{d}t,
\quad
\varphi(t_0)=x_0,\ \varphi(t_0+T)=x_1,
\end{equation}
where $a_{ij} = (\Sigma^{-1})_{ij}$, $\Sigma = \sigma(x)\sigma^\top(x)$ denotes the noise covariance matrix. 
The action $S_T[\varphi]$ measures the
difficulty of the passage from state $x_0$ to the vicinity of state $x_1$ via path $\varphi$ in
the following probabilistic sense~{\cite{freidlin_random_2012}}:
\[ \lim_{\delta \downarrow 0} \lim_{\varepsilon \downarrow 0} - \varepsilon
{\log P} (|X^{\varepsilon} - \varphi | < \delta) \approx
\inf_{T > 0} S_T [\varphi] . \]

For rare event problems such as transitions between metastable states, the quantity of primary interest is the minimal cost required to move from state $x_0$ to state $x_1$, which leads to the definition of the \emph{quasi-potential}~{\cite{freidlin_random_2012}}:
\begin{equation} \label{eq:Quai-Potential}
V(x_0,x_1)
= \inf_{T>0}
\inf_{\varphi \in C_{x_0,x_1}(0,T)}
S_T[\varphi].
\end{equation}
We use $T^{\ast}$ and $\varphi^{\ast}$ denote the minimizers of $S_T[\varphi]$. The minimizing trajectory $\varphi^*$ is referred to as the MAP. The core of quasi-potential calculation lies in solving a variational minimization problem to obtain the MAP, a process collectively known as the \textit{minimum action method}~\cite{e_minimum_2004}. However, a significant numerical challenge arises when the MAP passes through or originates from a stationary point of the deterministic system: the optimal transition time $T^*$ theoretically tends toward infinity. For instance, in escape problems, the system must slowly "climb" against the deterministic drift field $b(x)$ starting from a stable equilibrium point $x_0$. Since the deterministic velocity is zero at the equilibrium point, the evolution of the path during its initial departure is extremely slow, which mathematically manifests as the residence time of the trajectory near $x_0$ becoming infinite.

To accurately resolve this long-term asymptotic behavior numerically, it is common to employ time reversal and domain mapping. Specifically, considering an escape path from a stable equilibrium $x_0$ to a non-stationary point $x_1$, the original problem is formulated as:
\[
    \min_{\substack{\varphi(\tau = -\infty) = x_1 \\ \varphi(\tau = 0) = x_0}} \frac{1}{2} \int_{-\infty}^{0} \|\varphi_{\tau} - \tilde{b}(\varphi)\|_{\Sigma^{-1}}^2 \diff\tau.
\]
By introducing the time reversal $t = -\tau$ and defining the reversed drift field $b(\varphi) = -\tilde{b}(\varphi)$, the problem originally defined on $(-\infty, 0]$ is mapped onto the semi-infinite interval $[0, \infty)$:
\[
    \min_{\substack{\varphi(t = \infty) = x_1 \\ \varphi(t = 0) = x_0}} \frac{1}{2} \int_{0}^{\infty} \|\varphi_t - b(\varphi)\|_{\Sigma^{-1}}^2 \diff t.
\]
Under this formulation, the calculation of the MAP is transformed into a variational problem on $[0, \infty)$. Traditional local discretization methods, such as finite differences, often require artificial truncation of the infinite domain or complex adaptive meshing to handle the asymptotic boundary conditions as $t \to \infty$. In contrast, the Laguerre spectral method leverages the natural decay properties of its basis functions at infinity, enabling the precise capture of the exponential dynamical characteristics near stationary points without the need for domain truncation.

\section{The Euler--Lagrange approach and numerical strategies}
\label{Numerical Strategies}
Based on the problem reformulation on the semi-infinite domain introduced in Section \ref{Problem Formulation}, the action functional for the transformed system is given by:
\begin{equation}
    \label{eq:MAMRp} S[\varphi] = \frac{1}{2} \int_0^{\infty} \|\varphi_t - b(\varphi)\|_{\Sigma^{-1}}^2 \diff t.
\end{equation}
For simplicity, we assume $\Sigma$ to be the identity matrix and decompose the drift field as 
\[b(\varphi) = L\varphi + f(\varphi),\]
where $L \varphi$ is the major linear part of $b (\varphi)$. The first variation of $S[\varphi]$ with respect to $\varphi$ in the direction of any test function $\psi \in H_0^1(\mathbb{R}^+)$ leads to the optimality condition:
\begin{equation}
    \label{var} \Bigl( \frac{\delta S}{\delta \varphi}, \psi \Bigr) = \int_0^{\infty} \langle \varphi_t - L\varphi - f(\varphi), \psi_t - L\psi - f'(\varphi)\psi \rangle \,\mathrm{d}t = 0,
\end{equation}
where the space $H_0^1(\mathbb{R}^+)$ is defined as \[H_0^1(\mathbb{R}^+) = \left\{ \varphi \in H^1(\mathbb{R}^+) \mid \varphi(0) = \varphi(\infty) = 0 \right\}.\]
Integration by parts yields the corresponding second-order Euler--Lagrange (E--L) equation (\cite{evans2022partial}):
\begin{equation}
    -\varphi_{tt} + L\varphi_t + f'(\varphi)\varphi_t - (L + f'(\varphi))^T \left(\varphi_t - L\varphi - f(\varphi)\right) = 0. \label{eq:EL1}
\end{equation}
By partitioning the terms into linear and nonlinear operators, the E--L equation can be simplified as 
\[
\mathcal{G}(\varphi) = {A}(\varphi) + \mathcal{N}(\varphi) = 0, 
\]
where
\begin{align}
    {A}(\varphi) &= -\varphi_{tt} + (L - L^T)\varphi_t + L^T L \varphi, \label{eq:linear_op} \\
    \mathcal{N}(\varphi) &= (f'(\varphi) - f'(\varphi)^T)\varphi_t + f'(\varphi)^T (L\varphi + f(\varphi)) + L^T f(\varphi), \label{eq:nonlinear_op}
\end{align}
where $f'(\varphi)$ denotes the Jacobian matrix of the nonlinear part $f$.

The above formulation focuses on ODE systems. For spatially extended systems governed by PDEs, we first performs a spatial semi-discretization  (e.g., finite differences/elements or spectral methods) to obtain a high-dimensional ODE system. The minimum action problem is then formulated for the resulting ODE dynamics, and the transition path is computed in the reduced setting. To solve this variational problem, two numerical paradigms are generally considered:
\begin{enumerate}
    \item \textbf{Discretization-then-variation}: This approach seeks a finite-dimensional approximation $\varphi_h \in V_N(\mathbb{R}^+)$ by directly minimizing the discretized action functional $S[\varphi_h]$ \eqref{eq:MAMRp}. This method is widely utilized in traditional MAM implementations (\cite{e_minimum_2004,wan_dynamic_2017}).
    \item \textbf{Variation-then-discretization}: This approach first derives the E--L equation \eqref{eq:EL1} and subsequently solves the resulting boundary value problem using iterative schemes (see e.g. \cite{heymann_geometric_2008}).
\end{enumerate}
Under appropriate anti-aliasing conditions, these two paradigms can be shown to be equivalent.

We solve the nonlinear system $\mathcal{G}(\varphi)=0$ by a stabilized linearly semi-implicit iteration (see, e.g. \cite{saad2003iterative,shen_numerical_2010, WangYu2018EfficientSecond}),
\begin{equation}\label{eq:semiimplicit}
\frac{\varphi^{n+1}-\varphi^{n}}{\tau}+ {A} \varphi^{n+1}=- \mathcal{N} (\varphi^{n}),
\end{equation}
which can be interpreted as a preconditioned gradient-type method with $(I+\tau A)^{-1}$ acting as the preconditioner. Since $A$ is linear and independent of $\varphi$, the matrix $I+\tau A$ can be assembled once and reused throughout the iteration. Under the Laguerre spectral discretization, $I+\tau A$ is sparse (see next section for details) and can therefore be inverted efficiently.

Moreover, when the linear operator $L$ admits a diagonal representation in the chosen Laguerre basis, we further obtain a fully diagonalized Laguerre spectral scheme, in which the update reduces to element-wise operations. This significantly lowers the per-iteration cost and is particularly advantageous for large-scale dynamics. 

\section{Laguerre spectral method}
\label{Laguerre Spectral Method}

In this section, we develop the Laguerre spectral discretization for the Euler--Lagrange equations derived previously. The choice of Laguerre-type basis functions is motivated by their natural orthogonality and decay properties on the semi-infinite interval $[0, \infty)$.

\subsection{Laguerre polynomials and Laguerre functions}
\label{Laguerre Polynomials and Laguerre Functions}
The standard Laguerre polynomials, denoted by $\mathscr{L}_k(t)$, are defined as the eigenfunctions of the following Sturm--Liouville problem (see e.g.
{\cite{shen_spectral_2011}}, Chapter 7):
\[
    t y'' + (1 - t) y' + ky = 0, \quad t \in [0, \infty).
\]
These polynomials can be generated via the following three-term recurrence relation:
\begin{equation}\label{eq:std-3term}
\begin{aligned}
    &\LaP_0(t) = 1, \quad
    \LaP_1(t) = 1 - t, \\
    &(k + 1) \LaP_{k+1}(t)  = (2k + 1 - t) \LaP_k(t) - k \LaP_{k-1}(t), \quad k \ge 1.    
\end{aligned}
\end{equation}
The Laguerre polynomials are orthogonal with respect to the weight function $w(t) = e^{-t}$ on the interval $[0, \infty)$, satisfying:
\[
    \int_0^{\infty} \LaP_k(t) \LaP_j(t) e^{-t} \diff t = \delta_{kj},
\]
where $\delta_{kj}$ denote the Kronecker delta function. The Laguerre polynomials have following boundary value
\begin{equation}\label{eq:LaP-bv}
    \LaP_k(0) = 1, \quad \text{ for all } k \ge 0,
\end{equation} 
which will be used to build basis with zero boundary value. 
Furthermore, they satisfy several differentiation identities:
\begin{align}\label{eq:LaP-derivatives}
    \begin{aligned}
    & \LaP_n'(t) = -\sum_{k=0}^{n-1} \LaP_k(t), \\
    & \LaP_n'(t) - \LaP_{n+1}'(t) = \LaP_n(t), \quad
    t {\LaP}_n'(t) = n(\LaP_n(t) - \LaP_{n-1}(t)).
    \end{aligned}
\end{align}
To handle the variational problem in $L^2(0, \infty)$ more effectively, it is suggested by Shen~\cite{Shen2000StableEfficient} that one should utilize the Laguerre functions $\LaF_k(t)$, which are defined by pre-multiplying the Laguerre polynomials with an exponential decay factor:
\begin{equation}\label{eq:LaF}
    \LaF_k(t) := \LaP_k(t) e^{-t/2}.
\end{equation}
A key advantage of this transformation is that the weight function is absorbed into the basis, resulting in orthogonality with respect to the standard $L^2$ inner product:
\[
    \int_0^{\infty} \LaF_k(t) \LaF_j(t) \,\mathrm{d}t = \delta_{kj},
\]
while maintaining $\LaF_k(0) = 1$. From \eqref{eq:LaP-derivatives}, we obtain the following differential properties of $\LaF_k(t)$:
\begin{equation}\label{eq:LaF_diff}
\begin{aligned}
    & \LaF_n'(t) - \LaF_{n+1}'(t) = \frac{1}{2} (\LaF_n(t) + \LaF_{n+1}(t)), \\
    & t \LaF_n'(t) = -\frac{n}{2} \LaF_{n-1}(t) - \frac{1}{2} \LaF_n(t) + \frac{n+1}{2} \LaF_{n+1}(t).
\end{aligned}
\end{equation}
These identities will be used to construct Laguerre bases with sparse differentiation matrices.

\subsection{Laguerre spectral Galerkin method for linear operators}

To solve the Euler--Lagrange equations, we employ a spectral Galerkin scheme. For a function $x(t) \in H^1([0, \infty))$ with the initial condition $x(0) = x_0$, we consider the following expansion:
\begin{equation}
    x(t) = \sum_{k=0}^{N} \hat{x}_k \phi_k(t),
\end{equation}
where the basis functions $\{\phi_k(t)\}$ are constructed from the Laguerre functions $\LaF_k(t)$ to satisfy the underlying functional space requirements~\cite{ChenShenYu2012NewSpectral}:
\[
    \phi_0(t) = \LaF_0(t), \quad \phi_k(t) = \LaF_k(t) - \LaF_{k-1}(t), \quad k = 1, 2, \ldots, N.
\]
By utilizing the properties of Laguerre functions, the corresponding mass matrix $M$, stiffness matrix $S$, and first-order differentiation matrix $D$ can be derived analytically:
\begin{align*}
m_{kj} &= \int_0^{\infty} \phi_k(t)\phi_j(t)\diff t
= \left\{\;
\begin{aligned}
&1,  && k=j=0,\\
&2,  && k=j\neq 0,\\
&-1, && |k-j|=1,\\
&0,  && \text{otherwise},
\end{aligned}
\right. \\[1mm]
s_{kj} &= \int_0^{\infty} \phi_k'(t)\phi_j'(t)\diff t
= \left\{\;
\begin{aligned}
&1/4, && k=j=0,\\
&1/2, && k=j\neq 0,\\
&1/4, && |k-j|=1,\ k\neq0,\ j\neq0,\\
&0,        && \text{otherwise},
\end{aligned}
\right. \\[1mm]
d_{kj} &= \int_0^{\infty} \phi_k'(t)\phi_j(t)\diff t
= \left\{\;
\begin{array}{ll}
-1/2, & k=j=0,\\
0,    & k=j\neq 0,\\
1/2,  & j=k+1,\\
-1/2, & j=k-1,\\
0,    & \text{otherwise}.     
\end{array}
\right.
\end{align*}
In the MAM framework, since the initial state $x_0$ is fixed, we adopt a lifting technique similar to the treatment of non-homogeneous Dirichlet boundary conditions in finite element methods. Specifically, we decompose the path as $x(t) = x_0 \phi_0(t) + \varphi_0(t)$, where $\varphi_0(t)$ vanishes at $t=0$ and is expanded using the remaining basis functions $\{\phi_k\}_{k=1}^N$. This approach allows us to solve for the second part while automatically satisfying the boundary condition $x(0)=x_0$.

Now, we describe the Laguerre method for the linear operator $A (\varphi)$. 
It is known that that resolution and efficiency of Laguerre spectral methods are significantly influenced by the spatial decay rate of the basis functions. To account for this, we introduce a linear time scaling $t = \beta \tau$, where $\tau \in [0, \infty)$ is the computational time and $\beta > 0$ is the scaling parameter. Under this transformation, the derivative and the action functional are reformulated as:
\begin{equation}\label{eq:scaledAction}
    \partial_t = \frac{1}{\beta} \partial_\tau, \quad S[\varphi(\tau), \beta] = \frac{1}{2} \int_0^{\infty} \bigl\| \frac{1}{\beta} \frac{d\varphi}{d\tau} - b(\varphi) \bigr\|^2 \beta d\tau.
\end{equation}
This formulation demonstrates an analogous linear scaling to the tMAM method~\cite{wan_minimum_2015}, yet it maintains finite $\beta$ values on the original domain $[0, \infty)$. 

The linear operator taking into account the scaling parameter $\beta$ is:
\[
	A_{\beta} (\varphi) = - \dfrac{1}{\beta} \varphi_{tt} + (L - L^T) \varphi_t
	+ \beta L^T L \varphi, \quad A_{\beta} (\varphi) = A_{\beta} (\varphi_0) +
	A_{\beta}  (x_0 \phi_0 (t)),
\]
where the last term $A_{\beta}  (x_0 \phi_0 (t))$ is a fixed term. Applying the Galerkin condition, the weak form is given by:
\[
    (A_{\beta}(\varphi), \psi) = \frac{1}{\beta} (\varphi_\tau, \psi_\tau) + (L - L^T)(\varphi_\tau, \psi) + \beta (L^T L \varphi, \psi) = 0, \quad \forall \psi \in V_N^0.
\]
Here $V_N^0=\text{span}\{ \phi_k, k=1,\ldots, N \}$.
Let $X \in \mathbb{R}^{m \times N}$ be the matrix of coefficients where each row corresponds to a component of the $m$-dimensional path $\varphi_0$. Using the previously defined matrices $M, S$, and $D$, the discrete weak form is:
\[
    (A_{\beta}(\varphi), \psi) = \frac{1}{\beta} XS + (L - L^T) XD + \beta L^T L XM + F_{x_0}.
\]
Here $\psi$ stands for a column vector composed of all basis functions.
$F_{x_0}$ is related to boundary term. To facilitate the numerical solution, we vectorize $X$ into $\mathbf{x} = \text{vec}(X) \in \mathbb{R}^{dN \times 1}$. Utilizing the properties of the Kronecker product, the linear system for $\varphi_0$ is expressed as:
\begin{equation}
		\label{linsys} (A_{\beta} (\varphi_0), \psi) = \dfrac{1}{\beta} S \otimes Ix
		+ D \otimes (L^T - L) x + \beta M \otimes (L^T L) x,
	\end{equation}
where $I$ is the identity matrix of dimension $m$. The boundary term $F_{x_0}$ arises from the fixed component $x_0 \phi_0(t)$ and is computed as:
\begin{equation}
    \label{bound} F_{x_0} = \bigl[ \frac{1}{\beta} s_{01} x_0 + d_{01} (L^T - L) x_0 + \beta m_{01} (L^T L) x_0 \bigr] \otimes e_1,
\end{equation}
where $e_1$ is the first unit vector in the coefficient space.

\subsection{Fully diagonalized Laguerre spectral method}
Although the Laguerre spectral method has high accuracy, due to the fact that the mass matrix $M$ and the stiffness matrix $S$ are tri-diagonal rather than diagonal, directly solving \eqref{linsys} involves complex matrix operations (especially Kronecker product), which becomes slow when dealing with high-dimensional systems (such as partial differential equations). To alleviate this, we propose a matrix decomposition method to fully diagonalize the linear operator, transforming the coupled system into a series of independent scalar equations \citep{SHEN2007710}.

Consider the generalized eigenvalue problem associated with the stiffness and mass matrices:
\begin{equation}
\label{eigen} Sx = \lambda Mx.
\end{equation}
Since both $S$ and $M$ are symmetric positive definite within the Laguerre framework, all the eigenvalues of (\ref{eigen}) are real and positive. Let $\Lambda = \text{diag}(\lambda_1, \dots, \lambda_N)$ be the diagonal matrix of eigenvalues and $P$  be the matrix whose columns are the
corresponding eigenvectors of (\ref{eigen}). Then (\ref{eigen}) becomes
\begin{equation}
	\label{decomp} SP = MP \Lambda .
\end{equation}
One can explain (\ref{decomp}) in another way. The matrix $P$ has the
properties that can diagonalize the mass and stiffness matrix simultaneously:
\[
P^T MP = I, \quad P^T S P = \Lambda.
\]
That is to say, let $p_{ij}$ be the entries of $P$, we can use the following
approximation
\[ x (t) = \sum_{k = 0}^{\infty} \bar{x}_k \psi_k (t), \]
alternatively, where $\psi_j (t) = \sum_i p_{ij} \phi_i (t)$ and $\hat{x}_i =
\sum_j p_{ij}  \bar{x}_i$.
In this case, thanks to the diagonal of $L$, (\ref{linsys}) becomes:
\[
	(A_{\beta} (\varphi_0), \psi) = \dfrac{1}{\beta} \Lambda \otimes I \hat{x} +
	\beta I \otimes (L^T L)  \hat{x},
\]
where $\hat{x}$ denotes the vector form of the discretized $\varphi_0$ with
each column corresponding to the coefficients $\bar{x}_i$. Then the linear
part of the MAM system becomes diagonal and simply to solve. 

\subsection{Numerical quadrature for nonlinear terms}

While the linear operators are discretized analytically in the spectral domain, the nonlinear drift term $b(\varphi)$ typically requires evaluation in the physical space.  To ensure the exact integration of nonlinear terms, particularly for the quadratic nonlinearities inherent in the Burgers or Navier-Stokes equations, one must account for the increased degree of the integrand. Specifically, if the path $\varphi$ is represented by a polynomial of degree $N$, its quadratic products result in terms of degree up to $4N$ in the action functional. Consequently, the standard Gauss quadrature with $N$ points is insufficient due to aliasing errors; instead, a Gauss-Laguerre-Radau quadrature with $M \ge 2N+1$ points is required to maintain spectral precision. The computation of the corresponding quadrature points and weights involves determining the zeros of $L_{2N+1}(x)$, a procedure detailed in the standard spectral literature (\cite{shen_spectral_2011}, Chapter 7.1.3). However, the standard procedure has stability issue for large $N$. In our implementation, these nodes and weights are efficiently computed using improved Laguerre spectral algorithms developed in~\cite{huang2024improved}.

Since in the numerical implementation,  we use Laguerre functions as basis functions, so $\varphi$ and $b (\varphi)$ are not polynomials, but polynomials multiplied by some exponential function. This necessitates a specialized treatment of the quadrature weights to account for the varying decay rates in the integrand. Specifically, for terms involving powers of $\varphi$, such as $\int_0^{\infty} \varphi^{\alpha}(t) \diff t$, the integral is reformulated as:
\[
    \int_0^{\infty} \varphi^{\alpha}(t) \diff t = \int_0^{\infty} \left[ \psi(t) \right]^{\alpha} e^{-\alpha t / 2} \diff t, \quad \psi(t) \in P_N.
\]
For the linear case ($\alpha=2$), the term corresponds to the standard $L^2$ norm and can be computed analytically or via standard Gauss--Laguerre quadrature. However, for higher-order nonlinearities ($\alpha > 2$), we employ a generalized Laguerre-Gauss-Radau quadrature:
\[
    \int_0^{\infty} \varphi^{\alpha}(t) \diff t \approx \sum_{j=0}^{M} b(\varphi(\xi_j^{(\alpha)})) \hat{\omega}_j^{(\alpha)},
\]
where $\{\xi_j^{(\alpha)}\}_{j=0}^{2N}$ and $\{\hat{\omega}_j^{(\alpha)}\}_{j=0}^{2N}$ denote the quadrature points and weights adjusted to the exponential weight function $e^{-\alpha t/2}$. 

As detailed in \cite{huang2024improved}, the above pseudo-spectral procedure for nonlinear terms helps maintain stability in double-precision calculations when thousands of Laguerre basis functions are used, and provides the accuracy required for the nonlinear MAM iterations.

\subsection{The choice of scaling factor}


The choice of scaling factor $\beta$ (see \eqref{eq:scaledAction}) is critical to the efficiency of the Laguerre method. Different scaling options are proposed. Tang~\cite{tang_hermite_1993} proposed a scaling approach for Hermite method by making all Gauss-Hermite quadrature points inside the effective spatial region of functions to be approximated, which also works for Laguerre method. Xia et al.~\cite{XiaSC2021EfficientScaling} introduced a frequency-dependent scaling strategy. Huang and Yu~\cite{huang2024improved} gave an explicit formula for the optimal scaling of Laguerre approximation of exponential decay functions with oscillation. Hu and Yu~\cite{hu2026scaling,huscaling} recently established a comprehensive error estimate framework for scaled Hermite and Laguerre approximation, providing a more intrinsic way to choose optimal scaling factor.
In this paper, building upon the error analysis framework for Laguerre methods \citep{huscaling}, we implement a dynamic adaptation mechanism to optimize the parameter $\beta$ throughout the iterative process. The core of this strategy lies in equilibrating the spatial and spectral resolution of the Laguerre basis. 

The method estimates the truncation error by monitoring the solution's behavior in the far-field or "tail" region $[x_K, \infty)$ of the domain, where $x_K$ is chosen based on the distribution of the Laguerre-Gauss-Radau quadrature points. The adjustment logic is governed by the relative magnitude of the spatial decay error versus the spectral interpolation error:
\begin{itemize}
    \item When the transition path exhibits broad spatial features or significant activity in the far-field, the truncation error in the tail region becomes dominant. In such cases, $\beta$ is increased to "stretch" the basis functions, thereby extending their effective spatial region and improving spatial coverage.
    \item Conversely, if the system develops high-frequency oscillations or steep gradients nor far away from the origin that are unresolved by the current basis, $\beta$ is reduced. This effectively "compresses" the basis functions toward the origin, concentrating the spectral resolution to better resolve local transient features.
\end{itemize}
By adapting the scaling factor during the optimization process, the LMAM remains robust and accurate even when the characteristic scales of the transition path change significantly. The complete adaptive update strategy is summarized in Algorithm~\ref{alg:beta_update}.

\begin{algorithm}[htbp]
\caption{Adaptive $\beta$ Update} \label{alg:beta_update}
\begin{algorithmic}[1]
\Require Current solution $\varphi$, baseline $b(\varphi)$, threshold $\tau=0.3$, previous $\beta$
\Ensure Updated $\beta$ value, projected solution $\varphi$

\State Compute total residual norm: $E_\text{tot} \gets \|\varphi_t - b(\varphi)\|_{L^2( [0,\infty])}$ 
\State Compute far-field residual norm: $E_x \gets \|\varphi_t - b(\varphi)\|_{L^2( [x_k,\infty])}, \ k\sim\lfloor N/3 \rfloor$
\State Compute residual corresponding to frequency truncation:  $E_f \gets E_\text{tot} - E_x$
\State Calculate scaling ratio: $r \gets \frac{E_f}{E_x}$ \Comment{Frequency-to-space ratio}
 
\If{$|r - 1| > \tau$} \Comment{Threshold check}
    \State $\delta \gets \text{clip}(\log r, -\tau, \tau)$
    \State $\beta_{\text{new}} \gets \beta / \exp(0.5\delta)$
    \State $\beta \gets 0.8\beta + 0.2\beta_{\text{new}}$ \Comment{Exponential moving average}
\EndIf
\State Project the solution $\varphi$ to the solution space using new $\beta$.
\State \Return $\beta$ and project solution $\varphi$
\end{algorithmic}
\end{algorithm}

The overall LMAM algorithm is summarized in Algorithm \ref{alg:LMAM}.

\begin{algorithm}[htbp]
\caption{LMAM} \label{alg:LMAM}
\begin{algorithmic}[1]
\Require Initial $\beta$, maximum iterations \texttt{maxiter}, tolerance \texttt{tol},update tolerance $\epsilon$
\Ensure Optimized solution $\varphi_{\text{new}}$

\State Initialize $\varphi$
\State Precompute matrix $A$ for linear part of $G(\varphi)$ 
\For{$\text{it} = 1$ \textbf{to} \texttt{maxiter}}
    
    \State Compute linear gradient $A\varphi$
    \State Compute nonlinear part $\mathcal{N}(\varphi)$ 
    \State $G(\varphi) \gets A\varphi +\mathcal{N}(\varphi)$ \Comment{Combine linear and nonlinear parts}
    \State Compute $\varphi_{\text{new}}$ via optimization method (Solve \eqref{eq:semiimplicit} or use it as a preconditioner)
    \State $\text{resv} \gets \|\varphi_{\text{new}} - \varphi\|$ \Comment{Residual calculation}
	\If {$\text{resv} < \epsilon$}
		\State  Adapt $\beta$ using Algorithm \ref{alg:beta_update} \Comment{Adaptive $\beta$ selection}
	\EndIf
    \If{$\text{resv} < \texttt{tol}$}
        \State \Return $\varphi_{\text{new}}$
    \Else
        \State $\varphi \gets \varphi_{\text{new}}$ \Comment{Update solution}
    \EndIf
\EndFor
\State \Return $\varphi_{\text{new}}$ \Comment{Fallback return after maxiter}
\end{algorithmic}
\end{algorithm}

\section{Convergence analysis of LMAM}
\label{Spectral Properties}
\subsection{Convergence  for linear systems}
\label{Convergence  for Linear Systems}
We first present a sharp convergence analysis of LMAM for linear systems. This setting admits an explicit characterization of the minimum
action path and allows us to rigorously quantify the spectral convergence behavior of the Laguerre
projection, thereby revealing the fundamental mechanism underlying the efficiency of the proposed
method.
\subsubsection{Exact solution of linear minimum action problem }

We consider the linear minimum action problem
\begin{equation}
S[\phi]
=
\frac{1}{2}
\int_0^{+\infty}
\|\dot{\phi}(t) - B \phi(t)\|^2 \diff t,
\label{eq:linear_action}
\end{equation}
where $B \in \mathbb{R}^{m \times m}$ is a constant matrix and $\phi(0)=x_0$.
Following classical results (see, e.g.,{\cite{chen_asymptotic_2006}}), the unique minimizer of \eqref{eq:linear_action}
can be expressed explicitly.

\begin{lemma}
\label{lem:linear_exact}
Assume that all eigenvalues of the matrix
\[
Q := \frac12 \Bigl( \int_0^{+\infty} e^{Bt} e^{B^{T}t} \diff t \Bigr)^{-1}
\]
have positive real parts. Then the MAP is given by
\[
\phi(t) = e^{-(2Q+B)t} x_0, \qquad t \in [0,+\infty).
\]
\end{lemma}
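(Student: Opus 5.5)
The plan is to reduce the statement to an algebraic Riccati identity for $Q$ and then argue by completing the square in the action~\eqref{eq:linear_action}. I will take the admissible class to be paths with $\phi(0)=x_0$ and $\phi(t)\to 0$ as $t\to\infty$, which is the time-reversed escape setting of Section~\ref{Problem Formulation}. I will also read the hypothesis as saying the Gramian $W:=\int_0^{\infty} e^{Bt}e^{B^Tt}\diff t$ converges, which forces $B$ to be Hurwitz.

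\textbf{Step 1: an algebraic identity for $Q$.} Differentiate $e^{Bt}e^{B^Tt}$ and integrate over $[0,\infty)$ to obtain the Lyapunov equation $BW+WB^T=-I$. Since $W$ is symmetric positive definite, multiplying on both sides by $W^{-1}=2Q$ gives
\[
QB+B^TQ=-2Q^2 .
\]
This is the only place where the definition of $Q$ enters.

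\textbf{Step 2: identify the candidate trajectory.} The Euler--Lagrange equation~\eqref{eq:EL1} with $L=B$, $f=0$ is $-\phi_{tt}+(B-B^T)\phi_t+B^TB\phi=0$. I look for a solution $\phi=e^{Kt}x_0$ with $K=-(2Q+B)$, which requires $K^2-(B-B^T)K-B^TB=0$. I will expand $(2Q+B)^2+(B-B^T)(2Q+B)-B^TB$ and simplify it using the identity of Step~1. The assumption that the eigenvalues of $Q$ have positive real part is what should make $K$ stable, so that $\phi$ decays and is admissible. If this ODE verification does not close, I will instead use the Pontryagin form: write $\dot\phi=B\phi+u$ with costate $p=u$, $\dot p=-B^Tp$, and recover $K$ from the requirement that the state stays bounded.

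\textbf{Step 3: global optimality by completing the square.} Setting $u=\dot\phi-B\phi$, I will try to write
\[
\|\dot\phi-B\phi\|^2=\|\dot\phi-K\phi\|^2+\tfrac{d}{dt}\bigl(\phi^TR\phi\bigr)
\]
for a symmetric $R$ built from $Q$. I expect $R=\pm 2Q$, and the cross terms should cancel exactly by the identity $QB+B^TQ=-2Q^2$. Integrating over $[0,\infty)$ and using $\phi(\infty)=0$ then gives $S[\phi]=\tfrac12\int_0^\infty\|\dot\phi-K\phi\|^2\diff t+\text{const}(x_0)$. Hence the action is minimized exactly when $\dot\phi=K\phi$, which gives existence, uniqueness, and the value of the quasi-potential.

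\textbf{Main obstacle.} The expected difficulty is in Steps~2--3, namely matching signs. A direct check in the scalar case $B=-b$ gives $Q=b$ and $K=B$, so the MAP is the free relaxation with zero action. In the non-normal case, the cross term $2\dot\phi^T(K-B)\phi$ is an exact derivative only if $K-B=-2(Q+B)$ is symmetric, which fails for general $B$. I will therefore first pin down, following~\cite{chen_asymptotic_2006}, the exact convention for the boundary condition at infinity (equivalently, whether $B$ or $-B$ is the reversed drift). Only then will I choose $R$ and use the Riccati identity to absorb the non-symmetric part, possibly after a change of variables $\phi=W^{1/2}\eta$ that symmetrizes the problem.
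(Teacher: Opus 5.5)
Your Steps 2--3, set up with drift $B$ in \eqref{eq:linear_action}, cannot be completed. The remedies you list (a cleverer $R$, the Pontryagin form, the change of variables $\phi=W^{1/2}\eta$) will not rescue them, because under that reading the claimed formula is in general \emph{not} the minimizer when $B\neq B^T$.

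Convergence of $W$ makes $B$ Hurwitz. Then $e^{Bt}x_0$ is admissible with zero action, so it is the unique minimizer. On the other hand, the Lyapunov equation gives $2Q+B=W^{-1}+B=-WB^TW^{-1}$, so $-(2Q+B)=WB^TW^{-1}$, and this coincides with $B$ only for symmetric $B$. Your own Step 2 computation shows the same thing: with $L=B$, $K=-(2Q+B)$ and $QB+B^TQ=-2Q^2$,
\[
K^2-(B-B^T)K-B^TB=2(B-B^T)(2Q+B),
\]
which does not vanish for non-symmetric $B$.

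Your scalar check cannot detect this, because for symmetric $B$ both sign conventions give the same path $e^{Bt}x_0$. Still, getting a zero-action ``MAP'' was already a sign that you were not solving a quasi-potential problem. Separately, the eigenvalue hypothesis on $Q$ is vacuous once $W$ converges, since $Q$ is then symmetric positive definite. Stability of the candidate comes from the Lyapunov identity, not from that hypothesis.

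The missing idea is to fix the sign, not to symmetrize. The lemma is the time reversal of the forward problem $\frac12\int_{-\infty}^0\|\dot\phi-B\phi\|^2\diff t$ with $\phi(-\infty)=0$, $\phi(0)=x_0$, where $B$ is the stable forward drift. By the convention $b=-\tilde b$ of Section~\ref{Problem Formulation}, the reversed drift is $-B$. So the action to minimize over paths with $\phi(0)=x_0$, $\phi(\infty)=0$ is $\frac12\int_0^\infty\|\dot\phi+B\phi\|^2\diff t$.

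With this sign your Step 3 closes at once with $R=-2Q$. Now $K+B=-2Q$ is symmetric, and the Riccati identity gives
\[
\|\dot\phi+B\phi\|^2=\|\dot\phi+(2Q+B)\phi\|^2-2\frac{\mathrm{d}}{\mathrm{d}t}\bigl(\phi^TQ\phi\bigr),
\]
hence $S[\phi]=x_0^TQx_0+\frac12\int_0^\infty\|\dot\phi+(2Q+B)\phi\|^2\diff t$. The candidate $e^{-(2Q+B)t}x_0=e^{WB^TW^{-1}t}x_0$ decays because $WB^TW^{-1}$ is similar to the Hurwitz matrix $B^T$. This gives existence, uniqueness, and the quasi-potential $x_0^TQx_0$.

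The Euler--Lagrange check of Step 2 also closes: with $L=-B$ one gets $K^2-(B^T-B)K-B^TB=4Q^2+2(QB+B^TQ)=0$. The paper gives no proof of this lemma and only cites the classical result, so you should state this sign convention explicitly as part of your argument.
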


This explicit form provides a natural benchmark for assessing the approximation properties of
Laguerre spectral discretizations.

\subsubsection{Laguerre projection and exponential convergence }

Let $\{\widetilde \LaP_n\}_{n\ge0}$ denote the Laguerre function basis defined in Section \ref{Laguerre Polynomials and Laguerre Functions}, and introduce
the scaled variable $\tau = \lambda t$ with $\lambda = \beta^{-1}$,
here we denote $\widehat{Q} = - 2 Q - B.$ For the Laguerre minimum action
method, set the projection approximation representation
\[ \varphi_h (t) = \sum_{n = 0}^N \widehat{q}_n \LaP_n (\lambda t) {e^{-
		\frac{\lambda t}{2}}} , \quad \lambda = \frac{1}{\beta} . \]
Here
\[  \widehat{q}_n = \lambda \int^{\infty}_0 \varphi (t) \LaP_n (\lambda t) {e^{-
		\frac{\lambda t}{2}}}  \mathd t. \]
Then we have the following result.
\begin{lemma}
	For $\varphi(t)=e^{-\widehat{Q}t}x_0$, assume that all eigenvalues of $\widehat{Q}$ have positive real parts. Then in the projection approximation, we have
	\[  \widehat{q }_n = \bigl( \frac{\hat{Q}}{\lambda} + \frac{1}{2} I
	\bigr)^{- 1} \bigl( I - \bigl( \frac{\hat{Q}}{\lambda} + \frac{1}{2} I
	\bigr)^{- 1} \bigr)^n x_0, \]
	where $I$ represents the identity matrix.
\end{lemma}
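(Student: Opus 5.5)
We substitute $s=\lambda t$ in the defining integral of $\widehat q_n$. This gives
\[
\widehat q_n=\int_0^\infty \LaP_n(s)\,e^{-s/2}\,e^{-\widehat Q s/\lambda}\,\mathd s\;x_0
=\int_0^\infty \LaP_n(s)\,e^{-Ks}\,\mathd s\;x_0,\qquad K:=\frac{\widehat Q}{\lambda}+\frac12 I .
\]
The factor $\lambda$ in front of the integral cancels the Jacobian. Every eigenvalue of $K$ has real part larger than $1/2$, because the eigenvalues of $\widehat Q$ have positive real parts and $\lambda>0$. Hence $\|e^{-Ks}\|\le C(1+s)^{m-1}e^{-(1/2+\delta)s}$ for some $\delta>0$, where $m$ is the size of $K$ (the polynomial factor accounts for possible Jordan blocks). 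So the integral converges absolutely, and all manipulations below, including differentiating or summing under the integral sign, are justified.

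The plan is to find the matrix Laplace transform $F_n(K):=\int_0^\infty \LaP_n(s)e^{-Ks}\,\mathd s$ and show that $F_n(K)=K^{-1}(I-K^{-1})^n$. Since $K$ commutes with itself, every expression is a function of the single matrix $K$, so we may argue exactly as in the scalar case.

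\textbf{Scalar route.} For a complex $p$ with $\operatorname{Re}p>0$, the classical formula $\int_0^\infty \LaP_n(s)e^{-ps}\,\mathd s = (p-1)^n/p^{n+1} = p^{-1}(1-p^{-1})^n$ follows from the Rodrigues formula or from the generating function $\sum_n \LaP_n(s)z^n=(1-z)^{-1}e^{-sz/(1-z)}$. To transfer this to matrices, write $e^{-Ks}$ through the Dunford--Cauchy integral over a contour $\Gamma$ enclosing the spectrum of $K$ inside $\{\operatorname{Re}p>1/2\}$. Then apply Fubini, which is allowed because on $\Gamma$ the integrand decays like $e^{-(1/2+\delta')s}$ times a polynomial in $s$. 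This gives $F_n(K)=\frac{1}{2\pi i}\oint_\Gamma p^{-1}(1-p^{-1})^n (p-K)^{-1}\,\mathd p = K^{-1}(I-K^{-1})^n$.

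\textbf{Recurrence route (alternative).} This avoids contour integrals. Use $\LaP_n'-\LaP_{n+1}'=\LaP_n$ from \eqref{eq:LaP-derivatives}, integrate by parts with $\frac{\mathd}{\mathd s}e^{-Ks}=-Ke^{-Ks}$, and use the boundary value $\LaP_n(0)=1$. This yields $F_n - F_{n+1} = K F_n - K F_{n+1}$ up to the boundary terms, which cancel, so $(I-K)F_n = (I-K)F_{n+1}$ is not yet the right relation. Instead, integrating by parts directly gives $KF_n(K)=\LaP_n(0)I+\int_0^\infty \LaP_n'(s)e^{-Ks}\,\mathd s = I-\sum_{k<n}F_k$. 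Solving this triangular recursion by induction gives $F_0=K^{-1}$ and $F_n=(I-K^{-1})F_{n-1}$. This route requires no contour integrals; the only checks are that the boundary terms vanish at infinity (guaranteed by the decay established above) and that $K$ is invertible.

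The main obstacle is a subtle point about convergence. The relevant threshold is $\operatorname{Re}>1/2$ for the eigenvalues of $K$, not $\operatorname{Re}>0$ for those of $\widehat Q$, and Jordan blocks must be handled. The decay bound on $e^{-Ks}$ above covers both, after which the identity is purely algebraic. Finally, multiply $F_n(K)$ by $x_0$ to obtain the stated formula for $\widehat q_n$.
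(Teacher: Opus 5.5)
Your proof is correct, but it takes a different route from the paper's. After the same substitution (the paper writes $R$ for your $K$), the paper integrates by parts $n+1$ times. It uses the explicit Taylor data $\LaP_n^{(k)}(0)=(-1)^k\binom{n}{k}$ for $k\le n$ and $\LaP_n^{(n+1)}\equiv 0$ to obtain $\sum_{k=0}^{n}(-1)^k\binom{n}{k}R^{-(k+1)}$, and closes with the binomial theorem. In effect this is a termwise Laplace transform of the monomial expansion of $\LaP_n$.

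Your recurrence route needs neither the coefficient formula nor repeated integration by parts. A single integration by parts together with $\LaP_n'=-\sum_{k<n}\LaP_k$ gives $F_n=(I-K^{-1})F_{n-1}$. This exhibits the coefficients directly as a geometric sequence whose ratio $I-K^{-1}$ is exactly the convergence factor of the subsequent error theorem. You also justify the vanishing of the boundary terms at infinity, which the paper leaves implicit. Your contour route is heavier but generic: it lifts any scalar Laplace-transform identity to a possibly non-diagonalizable $K$ in one stroke. By contrast, the paper's computation and your recurrence handle Jordan blocks for free, since they use only the fact that $K^{-1}$ commutes with $K$.

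Two small corrections. First, your abandoned first attempt is miscomputed. Integrating $\LaP_n'-\LaP_{n+1}'=\LaP_n$ against $e^{-Ks}$ gives $(KF_n-I)-(KF_{n+1}-I)=F_n$, that is, $F_n=K(F_n-F_{n+1})$, and not $F_n-F_{n+1}=K(F_n-F_{n+1})$. The correct relation is already the recursion $KF_{n+1}=(K-I)F_n$, so that line should be fixed rather than discarded.

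Second, the condition $\operatorname{Re}>1/2$ on the spectrum of $K$ is equivalent to the hypothesis on $\widehat{Q}$, so it is not a separate subtlety. For this lemma, $\operatorname{Re}>0$ on the spectrum of $K$ would already suffice. The threshold $1/2$ matters only for $\rho(I-K^{-1})<1$ in the next theorem.
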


\begin{proof}
	Because of
	\[ \widehat{q }_n = \lambda \int^{\infty}_0 \varphi (t) \LaP_n (\lambda t)
	{e^{- \frac{\lambda t}{2}}}  \mathd t. \]
	Substitute $\varphi(t)=e^{-\widehat{Q}t}x_0$ into the above expression and $\lambda t$ into
	$\tau$, then we obtain
	\[ \widehat{q }_n = \int^{\infty}_0 \LaP_n (\tau) {e^{- \bigl(
			\frac{\hat{Q}}{\lambda} + \frac{1}{2} I \bigr) \tau}}  x_0 \mathd \tau .
	\]
	For the sake of simplicity, we denote $R = \frac{\hat{Q}}{\lambda} +
	\frac{1}{2} I$, and
	\[ \widehat{Q }_n = \int^{\infty}_0 \LaP_n (\tau) {e^{- R \tau}}  \mathd \tau,
	\]
	By the assumption, $R$ is invertible. The only thing we need to do is to
	calculate $\widehat{Q }_n$, and $\widehat{q }_n = \widehat{Q }_n x_0$. It is
	easy to get
	\[ \frac{\mathd^k}{\mathd \tau^k } \LaP_n (0) = (- 1)^k \binom{n}{n-k}, \quad k \in [0, n] \cap \mathbb{N} \]
	and
	\[ \frac{\mathd^{n + 1}}{\mathd \tau^{n + 1} } \LaP_n (0) = 0. \]
	By the integration by parts, we have
	\[ \int^{\infty}_0 \LaP_n (\tau) {e^{- R \tau}}  \mathd \tau = \sum_{k = 0}^n -
	R^{- (k + 1)} e^{- R t}  \frac{\mathd^k}{\mathd
		\tau^k } \LaP_n (\tau) \Big|_{\tau = 0}^{\infty}
	\]
	By the binomial theorem, we obtain
	\[ \widehat{Q}_k = R^{- 1} (I - R^{- 1})^k. \qedhere \] 
\end{proof}

In particular, the LMAM exhibits \emph{geometric (exponential) convergence} with respect to
the number of Laguerre modes $N$.

\subsubsection{Optimal scaling and convergence factor}

As a direct consequence, the truncation error of the Laguerre approximation can be estimated
explicitly.

\begin{theorem}
	For the linear minimum action problem, the projection approximation error
	can be represented by
    \begin{equation}\label{eq:linear_error}
       \| \varphi (t) - \varphi_h  (t) \|_2 \leqslant \bigl\| \bigl( \hat{Q} +
	\frac{\lambda}{2} I \bigr)^{- 1} \bigr\|_2  \| x_0 \|_2 
    \frac{\bigl\| I - \bigl( \frac{\hat{Q}}{\lambda} + \frac{1}{2} I \bigr)^{-1}
		 \bigr\|^{N + 1}_2}
         {\sqrt{1 - \bigl\| I - \bigl( \frac{\hat{Q}}{\lambda}
			+ \frac{1}{2} I \bigr)^{- 1} \bigr\|^2_2}}. 
    \end{equation}
\end{theorem}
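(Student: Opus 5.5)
The plan is to use Parseval's identity in the scaled Laguerre basis together with the explicit coefficients from the preceding lemma. The scaled functions $\sqrt{\lambda}\,\LaP_n(\lambda t)e^{-\lambda t/2}$ form an orthonormal basis of $L^2(0,\infty)$. Expanding $\varphi=\sum_{n\ge0}\widehat q_n\LaP_n(\lambda t)e^{-\lambda t/2}$, the truncation error is the tail
\[
\varphi-\varphi_h=\sum_{n>N}\widehat q_n\LaP_n(\lambda t)e^{-\lambda t/2}.
\]
Parseval (applied componentwise, then summed over components) gives
\[
\|\varphi-\varphi_h\|_{L^2}^2=\frac1\lambda\sum_{n=N+1}^\infty\|\widehat q_n\|_2^2 .
\]
Here $\|\cdot\|_2$ on the left is read as the $L^2(0,\infty;\mathbb R^m)$ norm. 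Completeness of the Laguerre functions in $L^2(0,\infty)$ is the standard fact that justifies this identity.

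Next insert the formula from the previous lemma, $\widehat q_n=R^{-1}(I-R^{-1})^n x_0$ with $R=\widehat Q/\lambda+\tfrac12 I$. Submultiplicativity gives
\[
\|\widehat q_n\|_2\le\|R^{-1}\|_2\,\rho^n\|x_0\|_2, \qquad \rho:=\|I-R^{-1}\|_2 .
\]
Summing the geometric series for $\rho<1$ yields
\[
\sum_{n>N}\|\widehat q_n\|_2^2\le \|R^{-1}\|_2^2\|x_0\|_2^2\,\frac{\rho^{2(N+1)}}{1-\rho^2}.
\]
Taking the square root gives the factor $\rho^{N+1}/\sqrt{1-\rho^2}$. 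For the prefactor, $\lambda^{-1/2}\|R^{-1}\|_2=\lambda^{1/2}\|(\widehat Q+\tfrac\lambda2 I)^{-1}\|_2$. This reproduces \eqref{eq:linear_error} up to a power of $\lambda$.

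The main obstacle is this normalization. The displayed bound carries no $\sqrt\lambda$, so I must check whether the author measures the error in the scaled variable $\tau=\lambda t$. Under $\tau=\lambda t$ the error norm picks up a factor $\lambda^{-1/2}$, and the coefficient formula is already stated in $\tau$. In the $\tau$-variable the basis is orthonormal with unit weight, so Parseval gives exactly $\sum_{n>N}\|\widehat q_n\|^2$ with no stray power. Since $\|R^{-1}\|_2=\lambda\|(\widehat Q+\tfrac\lambda2 I)^{-1}\|_2$, the prefactor can then be aligned with \eqref{eq:linear_error}. I will adopt whichever reading matches the statement's prefactor exactly. If neither does, I will note that the bound holds up to a $\lambda$-dependent constant, which does not affect the convergence factor.

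The remaining point is the implicit hypothesis $\rho<1$, needed for the series to converge and the bound to be meaningful. I will state it as an assumption, remarking that it is the condition under which the bound is finite; when $\rho\ge1$ the right-hand side is $+\infty$ or undefined and there is nothing to prove.
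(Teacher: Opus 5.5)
Your route is the paper's route: Parseval in the scaled Laguerre basis, the coefficient formula $\widehat q_n=R^{-1}(I-R^{-1})^n x_0$, submultiplicativity, and a geometric tail sum. Your normalization $\|\varphi-\varphi_h\|^2=\lambda^{-1}\sum_{n>N}\|\widehat q_n\|_2^2$ is the correct one, since $\int_0^\infty \LaP_n(\lambda t)^2e^{-\lambda t}\diff t=1/\lambda$. The paper makes two slips here:
\begin{itemize}
\item it writes $\lambda\sum_{n>N}\|\widehat q_n\|_2^2$ instead of $\lambda^{-1}\sum_{n>N}\|\widehat q_n\|_2^2$;
\item in its last line it replaces $\lambda\|(\hat Q/\lambda+\frac{1}{2} I)^{-1}\|_2^2$ by $\|(\hat Q+\frac{\lambda}{2}I)^{-1}\|_2^2$, although $(\hat Q/\lambda+\frac{1}{2} I)^{-1}=\lambda(\hat Q+\frac{\lambda}{2}I)^{-1}$.
\end{itemize}
Together these produce the $\lambda$-free prefactor in the statement.

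The flaw in your proposal is its ending. The $\tau$-reading does not rescue the prefactor. In $L^2(\diff\tau)$ Parseval gives $\sum_{n>N}\|\widehat q_n\|_2^2$, and the prefactor becomes $\|R^{-1}\|_2=\lambda\|(\hat Q+\frac{\lambda}{2}I)^{-1}\|_2$, not the stated one.

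No reading can work, because the displayed inequality is false for $\lambda>1$. For scalar $\hat Q=q>0$ the submultiplicative step is an equality, so your bound with prefactor $\sqrt{\lambda}\,\|(\hat Q+\frac{\lambda}{2}I)^{-1}\|_2$ is attained. Concretely, take $q=1$, $x_0=1$, $\lambda=8$, $N=0$. Then $\varphi_h=\frac{8}{5}e^{-4t}$ and $\|e^{-t}-\frac{8}{5}e^{-4t}\|_{L^2(0,\infty)}=\sqrt{0.18}\approx0.424$. The stated right-hand side is only $\frac{1}{5}\cdot\frac{3/5}{4/5}=0.15$, and indeed $\sqrt{8}\cdot0.15=\sqrt{0.18}$.

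So do not adopt ``whichever reading matches''. State and prove the bound with the extra factor $\sqrt{\lambda}$, under the hypothesis $\rho<1$ that you rightly make explicit. That hypothesis is needed: the eigenvalue assumption on $\hat Q$ controls only the spectral radius of $I-R^{-1}$, not its $2$-norm. The displayed form then follows only for $\lambda\le1$, i.e.\ $\beta\ge1$. The convergence factor $\rho$ is unaffected, and it is all that the subsequent optimal-scaling discussion and the reported numerical rates rely on.
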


\begin{proof}
	Thanks to Parseval's identity,
	\begin{align*}
		\| \varphi (t) - \varphi_h  (t) \|^2_2 = & \lambda \sum_{n = N +
			1}^{\infty} \| \widehat{q }_n \|_2^2 & \\
		= & \lambda \sum_{n = N + 1}^{\infty} \bigl\| \bigl(
		\frac{\hat{Q}}{\lambda} + \frac{1}{2} I \bigr)^{- 1} \bigl( I - \bigl(
		\frac{\hat{Q}}{\lambda} + \frac{1}{2} I \bigr)^{- 1} \bigr)^n x_0
		 \bigr\|_2^2 & \\
		\leqslant & \lambda \sum_{n = N + 1}^{\infty} \bigl\| \bigl(
		\frac{\hat{Q}}{\lambda} + \frac{1}{2} I \bigr)^{- 1} \bigr\|^2_2 \bigl\|
		I - \bigl( \frac{\hat{Q}}{\lambda} + \frac{1}{2} I \bigr)^{- 1}
		 \bigr\|^{2 n}_2 \| x_0 \|^2_2 & \\
		= & \bigl\| \bigl( \hat{Q} + \frac{\lambda}{2} I \bigr)^{- 1}
		 \bigr\|^2_2  \frac{\bigl\| I - \bigl( \frac{\hat{Q}}{\lambda} +
			\frac{1}{2} I \bigr)^{- 1} \bigr\|^{2 (N + 1)}_2}{1 - \bigl\| I - \bigl(
			\frac{\hat{Q}}{\lambda} + \frac{1}{2} I \bigr)^{- 1} \bigr\|^2_2} \| x_0
		\|^2_2 &
	\end{align*}
	The conclusion is obtained by taking square root.
\end{proof}

Thus the order of projection error is related to $\left. \bigl\| I - \bigl(
	\frac{\hat{Q}}{\lambda} + \frac{1}{2} I \bigr)^{- 1} \right. \bigr\|_2 $, which we define as the theoretical 
	convergence factor for this problem.
	For
	\[ I - \bigl( \frac{\hat{Q}}{\lambda} + \frac{1}{2} I \bigr)^{- 1} = \bigl(
	\hat{Q} + \frac{\lambda}{2} I \bigr)^{- 1} \bigl( \hat{Q} -
	\frac{\lambda}{2} I \bigr), \]
	Use Schur decomposition of $\hat{Q} = U' T U$, where $U$ is a unitary matrix
	and $T$ is upper triangular, Then
    \begin{equation}\label{eq:linear_conv_factor_nongrad}
       \bigl\| I - \bigl( \frac{\hat{Q}}{\lambda} + \frac{1}{2} I \bigr)^{- 1}
	 \bigr\|_2 = \bigl\| \bigl( T + \frac{\lambda}{2} I \bigr)^{- 1} \bigl(
	T - \frac{\lambda}{2} I \bigr) \bigr\|_2 . 
    \end{equation}
	In the gradient case, $T$ is diagonal, denote $\lambda_1 $ and $\lambda_2 $
	are the maximum and minimum eigenvalue for $T$, then we obtain the optimal 
	$\lambda_{{\rm opt}} $ in this case is $2\sqrt{\lambda_1 \lambda_2 }$, and
    \begin{equation}
        \label{eq:linear_conv_factor_grad}
        \bigl\| I - \bigl( \frac{\hat{Q}}{\lambda_{{\rm opt}} } + \frac{1}{2} I
	\bigr)^{- 1} \bigr\|_2 = \frac{ \sqrt{\lambda_1} -
		\sqrt{\lambda_2}}{\sqrt{\lambda_1} + \sqrt{\lambda_2}}. 
    \end{equation}

This result provides a rigorous theoretical foundation for the adaptive scaling strategy employed in
LMAM, and explains the observed acceleration compared with fixed time scaling.

\subsection{Convergence properties for nonlinear systems}

We now discuss the convergence behavior of LMAM for general nonlinear systems.
Unlike the linear case, closed-form solutions are no longer available.
Nevertheless, meaningful convergence results can be obtained by combining local convexity
properties of the action functional with the approximation power of Laguerre spectral spaces.

\begin{assumption}[Local strong convexity / strong monotonicity]
\label{ass:local_strong_convex}
Let $\varphi_{\rm opt}$ be a (local) minimizer of the action functional $S[\varphi]$.
There exist constants $\alpha>0$ and $\delta>0$ such that for any
$\varphi_1,\varphi_2\in \mathcal{B}_\delta(\varphi_{\rm opt})
:=\{\varphi:\|\varphi-\varphi_{\rm opt}\|<\delta\}$,
the first variation $\partial S/\partial\varphi$ satisfies
\begin{equation}
\label{eq:strong_mono}
\Bigl(
\frac{\partial S}{\partial \varphi}(\varphi_1)
-\frac{\partial S}{\partial \varphi}(\varphi_2),
\ \varphi_1-\varphi_2
\Bigr)
\ge \alpha \|\varphi_1-\varphi_2\|^2 .
\end{equation}
\end{assumption}

\begin{assumption}[Local Lipschitz continuity of the first variation]
\label{ass:lipschitz_grad}
There exist constants $L>0$ and $\delta>0$ such that for any
$\varphi_1,\varphi_2\in \mathcal{B}_\delta(\varphi_{\rm opt})$,
\begin{equation}
\label{eq:lipschitz_grad}
\Bigl\|
\frac{\partial S}{\partial \varphi}(\varphi_1)
-\frac{\partial S}{\partial \varphi}(\varphi_2)
\Bigr\|
\le C_L \|\varphi_1-\varphi_2\| .
\end{equation}
\end{assumption}

\begin{theorem}
\label{thm:laguerre_apriori}
Define the weighted Sobolev space $\hat{B}_{a}^{m}(\mathbb{R}_{+})$ 
and corresponding weighted norm for the weight function ${\hat{\omega}_\alpha} = x^\alpha$ as
\[
\hat{B}_{a}^{m}(\mathbb{R}_{+}) := \left\{u \Big| \hat{\partial}_{x}^{k} u \in L_{\hat{\omega}_{a+k}}^{2}(\mathbb{R}_{+}), \, 0 \leq k \leq m\right\}, \quad \|u\|_{\hat{B}_{a}^{m}} := \Bigl( \sum_{k=0}^{m} \|\hat{\partial}_{x}^{k} u\|_{\hat{\omega}_{a+k}}^{2} \Bigr)^{1/2},
\]
where $\hat{\partial}_t \varphi := \bigl( \partial_t + \frac{1}{2} \bigr) \varphi$.
Suppose that $\hat{\partial}_t \varphi_\text{opt} \in \hat{B}_{0}^{m-1}(\mathbb{R}_+)$ and Assumption~\ref{ass:local_strong_convex} and~\ref{ass:lipschitz_grad} hold.
Let $\varphi_h\in V_N(\mathbb{R}^+)$ be the numerical solution
produced by the Laguerre MAM. Further assume that
$\varphi_h\in \mathcal{B}_\delta(\varphi_{\rm opt})$.
Then there exists a constant $C>0$, independent of $N$ (equivalently $h$),
such that
\begin{equation}
\label{eq:apriori_est}
\|\varphi_h-\varphi_{\rm opt}\|_1
\le C \sqrt{\frac{(N-m+1)!}{N!}} \|\hat{\partial}_t^{\,m}\varphi_{\rm opt}\|_{\hat{\omega}_{m-1}}.
\end{equation}
\end{theorem}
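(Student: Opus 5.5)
The plan is a standard Céa-type argument: a quasi-optimality estimate from strong monotonicity and Lipschitz continuity, followed by a Laguerre approximation bound applied to the best approximation of $\varphi_{\rm opt}$ in $V_N(\mathbb{R}^+)$. The proof has three steps.

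\emph{Step 1: Galerkin orthogonality.} The Laguerre MAM solution $\varphi_h$ satisfies the discrete first-variation condition $\bigl(\frac{\partial S}{\partial\varphi}(\varphi_h),\psi\bigr)=0$ for all $\psi\in V_N^0$. Under the anti-aliasing quadrature of Section~\ref{Laguerre Spectral Method}, the discretize-then-vary and vary-then-discretize formulations coincide, so this condition holds exactly. The exact minimizer satisfies \eqref{var}, so $\bigl(\frac{\partial S}{\partial\varphi}(\varphi_{\rm opt}),\psi\bigr)=0$ for every admissible $\psi$, in particular for every $\psi\in V_N^0$. Subtracting gives
\[
\Bigl(\frac{\partial S}{\partial \varphi}(\varphi_h)-\frac{\partial S}{\partial \varphi}(\varphi_{\rm opt}),\psi\Bigr)=0 \quad \text{for all } \psi\in V_N^0 .
\]

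\emph{Step 2: Céa-type bound.} Let $w_h\in V_N$ have the same boundary value $x_0$ at $t=0$, so that $\varphi_h-w_h\in V_N^0$, and assume $w_h\in\mathcal{B}_\delta(\varphi_{\rm opt})$ (true for $N$ large, by Step 3). Apply Assumption~\ref{ass:local_strong_convex} with $\varphi_1=\varphi_h$, $\varphi_2=\varphi_{\rm opt}$, and split $\varphi_h-\varphi_{\rm opt}=(\varphi_h-w_h)+(w_h-\varphi_{\rm opt})$. The Galerkin orthogonality of Step 1 removes the first piece, and Assumption~\ref{ass:lipschitz_grad} with Cauchy--Schwarz bounds the second:
\[
\alpha\|\varphi_h-\varphi_{\rm opt}\|^2\le C_L\|\varphi_h-\varphi_{\rm opt}\|\,\|w_h-\varphi_{\rm opt}\|.
\]
Hence $\|\varphi_h-\varphi_{\rm opt}\|\le (C_L/\alpha)\inf_{w_h}\|w_h-\varphi_{\rm opt}\|$.

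The norms must be read consistently. The first variation is naturally a map into the dual of $H_0^1(\mathbb{R}^+)$, so I will interpret the norm in both assumptions as the $H^1$ norm and the pairing as the $H^{-1}$--$H^1_0$ duality. The inequality above then holds in $\|\cdot\|_1$, which is the norm appearing in \eqref{eq:apriori_est}.

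\emph{Step 3: Laguerre approximation estimate.} It remains to bound $\inf_{w_h}\|w_h-\varphi_{\rm opt}\|_1$. I take $w_h$ to be the $H^1$-type orthogonal projection onto $V_N$ that preserves the value at $0$. This projection is built by approximating $\hat\partial_t\varphi_{\rm opt}$ with the $L^2$ Laguerre-function projection $\hat\Pi_{N-1}$ and then integrating back. The identities \eqref{eq:LaF_diff}, which say $\phi_k'+\frac12(\phi_k+\phi_{k+1})$-type relations hold with $\hat\partial_t=\partial_t+\frac12$, show that $\hat\partial_t$ maps the basis $\phi_k$ onto a shifted Laguerre-function space, so integrating back stays in $V_N$. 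I then invoke the known weighted estimate for Laguerre functions (Shen--Tang--Wang, Chapter~7 of \cite{shen_spectral_2011}):
\[
\|\hat\partial_t^{\,l}(\hat\Pi_N u-u)\|_{\hat\omega_{l}}\le \sqrt{\tfrac{(N-m+1)!}{(N-l+1)!}}\;\|\hat\partial_t^{\,m}u\|_{\hat\omega_{m-1}}, \qquad 0\le l\le 1 ,
\]
valid under the hypothesis $\hat\partial_t\varphi_{\rm opt}\in\hat B_0^{m-1}$. Applying it with $l=0$ and $l=1$, the factorial ratio is at most $\sqrt{(N-m+1)!/N!}$ up to a constant, which gives exactly \eqref{eq:apriori_est}.

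\emph{Main obstacle.} The delicate point is Step 3. The standard estimate controls $\|\hat\partial_t(\cdot)\|$, and I need to convert this to the plain $H^1$ norm and keep the boundary condition at $t=0$. I would handle this with the Hardy-type inequality $\|u\|\le 2\|\hat\partial_t u\|$, which holds for $u(0)=0$ after writing $u=e^{-t/2}v$. I also need $\|\partial_t u\|\le\|\hat\partial_t u\|+\frac12\|u\|$. Together these bound $\|u\|_1$ by a multiple of $\|\hat\partial_t u\|$. If exact boundary preservation fails, I would correct the projection with a multiple of $\phi_0$; the coefficient of that correction is controlled by the same tail bound, so it does not change the rate.
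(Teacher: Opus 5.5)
Your proposal is correct and is essentially the paper's argument: a Céa-type bound from strong monotonicity, Lipschitz continuity and Galerkin orthogonality, followed by the Laguerre $H^1$-projection estimate of Shen--Tang--Wang. The paper cites that estimate directly as their Theorem 7.11, while you rebuild it. Your integrate-back projection is exactly the paper's $\Pi$, since $(\hat\partial_t e,\hat\partial_t v)=(e',v')+\frac14(e,v)$ whenever $e(0)=v(0)=0$, and your $H^1$/$H^{-1}$ reading of the assumptions is what actually licenses the $\|\cdot\|_1$ conclusion, a point the paper leaves implicit.

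The one bookkeeping fix is in Step 3, where your displayed estimate has the wrong weight for $\hat\Pi_N u$, and its $l=1$ case only controls an $x$-weighted norm. What you need is the $l=0$ bound for $\hat\Pi_{N-1}$ applied to $\hat\partial_t\varphi_{\rm opt}$ with $m-1$ in place of $m$. This gives $\|\hat\partial_t(\varphi_{\rm opt}-w_h)\|\le\sqrt{(N-m+1)!/N!}\,\|\hat\partial_t^{\,m}\varphi_{\rm opt}\|_{\hat\omega_{m-1}}$ at once, and then $\|e\|_1\le 2\|\hat\partial_t e\|$ for $e(0)=0$ finishes. No $\phi_0$ correction is needed, because $\hat\partial_t$ maps $V_N^0$ onto the span of the first $N$ Laguerre functions and the boundary value is preserved exactly.
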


\begin{proof}
Let $\varphi_{\rm opt}\in H_0^1(\mathbb{R}_+)$ be a (local) minimizer of $S[\varphi]$ and
let $\varphi_h\in V_N(\mathbb{R}_+)$ be the LMAM approximation.
By the Euler--Lagrange equations for the continuous and discrete problems, we have
\[
\Bigl(\frac{\partial S}{\partial\varphi}(\varphi_{\rm opt}),\,\psi\Bigr)=0,
\quad \forall\,\psi\in H_0^1(\mathbb{R}_+),
\qquad
\Bigl(\frac{\partial S}{\partial\varphi}(\varphi_h),\,\psi_h\Bigr)=0,
\quad \forall\,\psi_h\in V_N^0.
\]

Under Assumption~\ref{ass:local_strong_convex} (strong monotonicity), with
$\varphi_1=\varphi_{\rm opt}$ and $\varphi_2=\varphi_h$, we obtain
\begin{equation}\label{eq:mono_step}
\alpha\|\varphi_{\rm opt}-\varphi_h\|^2
\le
\Bigl(\frac{\partial S}{\partial\varphi}(\varphi_{\rm opt})
-\frac{\partial S}{\partial\varphi}(\varphi_h),\,\varphi_{\rm opt}-\varphi_h\Bigr).
\end{equation}

Let $\Pi: H_0^1(\mathbb{R}_+)\to V_N^0(\mathbb{R}_+)$ be the projection defined by
\[
\bigl( (\varphi-\Pi\varphi)', \psi_h'\bigr) 
+ \frac14 ( \varphi-\Pi\varphi, \psi_h)=0,
\qquad \forall\,\psi_h\in V_N^0.
\]
Since $\Pi\varphi_{\rm opt}-\varphi_h\in V_N^0\subset H_0^1(\mathbb{R}_+)$,
the two Euler--Lagrange relations give
\[
\Bigl(\frac{\partial S}{\partial\varphi}(\varphi_{\rm opt}),\,\Pi\varphi_{\rm opt}-\varphi_h\Bigr)=0,
\qquad
\Bigl(\frac{\partial S}{\partial\varphi}(\varphi_h),\,\Pi\varphi_{\rm opt}-\varphi_h\Bigr)=0,
\]
and hence, by subtraction,
\begin{equation}\label{eq:orth_step}
\Bigl(\frac{\partial S}{\partial\varphi}(\varphi_{\rm opt})
-\frac{\partial S}{\partial\varphi}(\varphi_h),\,\Pi\varphi_{\rm opt}-\varphi_h\Bigr)=0.
\end{equation}
Combining \eqref{eq:orth_step} with $\varphi_{\rm opt}-\varphi_h=(\varphi_{\rm opt}-\Pi\varphi_{\rm opt})+(\Pi\varphi_{\rm opt}-\varphi_h)$ yields
\[
\Bigl(\frac{\partial S}{\partial\varphi}(\varphi_{\rm opt})
-\frac{\partial S}{\partial\varphi}(\varphi_h),\,\varphi_{\rm opt}-\varphi_h\Bigr)
=
\Bigl(\frac{\partial S}{\partial\varphi}(\varphi_{\rm opt})
-\frac{\partial S}{\partial\varphi}(\varphi_h),\,\varphi_{\rm opt}-\Pi\varphi_{\rm opt}\Bigr).
\]
Therefore, using Cauchy--Schwarz and Assumption~\ref{ass:lipschitz_grad} (Lipschitz continuity),
\begin{align*}
\Bigl|\Bigl(\frac{\partial S}{\partial\varphi}(\varphi_{\rm opt})
-\frac{\partial S}{\partial\varphi}(\varphi_h),\,\varphi_{\rm opt}-\Pi\varphi_{\rm opt}\Bigr)\Bigr|
& \le
\Bigl\|\frac{\partial S}{\partial\varphi}(\varphi_{\rm opt})
-\frac{\partial S}{\partial\varphi}(\varphi_h)\Bigr\|\,
\|\varphi_{\rm opt}-\Pi\varphi_{\rm opt}\| \\*
& \le
C_L\|\varphi_{\rm opt}-\varphi_h\|\,\|\varphi_{\rm opt}-\Pi\varphi_{\rm opt}\|.    
\end{align*}
Substituting into \eqref{eq:mono_step} and cancelling $\|\varphi_{\rm opt}-\varphi_h\|$ gives
\begin{equation}\label{eq:Galerkin_error_result}
\|\varphi_{\rm opt}-\varphi_h\|\le \frac{C_L}{\alpha}\,\|\varphi_{\rm opt}-\Pi\varphi_{\rm opt}\|.    
\end{equation}
Since $\hat{\partial}_t \varphi_\text{opt} \in \hat{B}_{0}^{m-1}(\mathbb{R}_+)$, then there exists a constant $C > 0$ independent of $N$, $m$, and $\varphi$ such that~(see \cite{shen_spectral_2011} Theorem 7.11)
	\begin{equation} \label{eq:proj-error}
	\| \varphi_\text{opt} - \Pi \varphi_\text{opt} \|_1 \leq C \sqrt{\frac{(N-m+1)!}{N!}} \| \hat{\partial}_t^m \varphi \|_{\hat{\omega}_{m-1}}.	    
	\end{equation}
The desired estimate is obtained by combining \eqref{eq:proj-error}	with \eqref{eq:Galerkin_error_result}.
\end{proof}


\section{Numerical experiments and applications}
\label{Numerical Experiments}




\subsection{Linear example}
This test is designed to verify that the Laguerre approximation exhibits the geometric convergence
predicted by the linear theory in Section~\ref{Convergence  for Linear Systems}, and to demonstrate
that the adaptive $\beta$ strategy automatically approaches a near-optimal time scaling.

We consider a two-dimensional linear gradient system
\begin{equation}\label{eq:exam-6.1}
    \left\{ \begin{array}{l}
		\dot{x}_1 = - 20 x_1 ,\\
		\dot{x_2} = - 0.2 x_2,
	\end{array} \right.
\end{equation}
    for which the optimal transition path is available analytically and the theoretical convergence factor
can be computed from the spectrum of the associated matrix $Q$ .

Table  \ref{table_linear} reports the agreement between the empirical and theoretical convergence factors for a range
of fixed $\beta$. The close match confirms that, for linear gradient systems, the LMAM
convergence is accurately characterized by the theory once rounding effects are excluded. We test several fixed values of $\beta$ and compare them with the adaptive $\beta$ strategy in Figure~\ref{linearfig}, where the path error decays geometrically as $N$ increases, and that the adaptive
strategy consistently achieves the best (or near-best) decay rate among all tested fixed scalings. This
demonstrates that the adaptive update effectively identifies an optimal temporal scaling factor ($\beta_\text{opt} = 0.25$) without
manual tuning.

\begin{table}[htbp]
    \centering
    \begin{tabular}{rcc}
            \toprule
            $\bm{\beta}$   & \textbf{numerical rate $r$} & \textbf{theoretical rate $r$}\\
            \midrule
            $ 0.10$ & 0.92281 & 0.92307\\
        
            $0.20$ & 0.85158& 0.85185\\
            
            $ 0.25$ & 0.81795 & 0.81818\\
           
            $ 0.50$ & 0.90475 & 0.90476\\
          
            $ 1.00$ & 0.95087 & 0.95121\\
            \bottomrule
    \end{tabular}
    \caption{The numerical and theoretical geometrical convergence rate $r^N$ for linear gradient case. By \eqref{eq:linear_error}, the theoretical 
    $r=\big\| I - ( {\hat{Q}}/{\lambda} + I/{2} )^{-1}\big\|_2$.}
    \label{table_linear}
\end{table}

\begin{figure}
\centering
\includegraphics[width=0.9\textwidth]{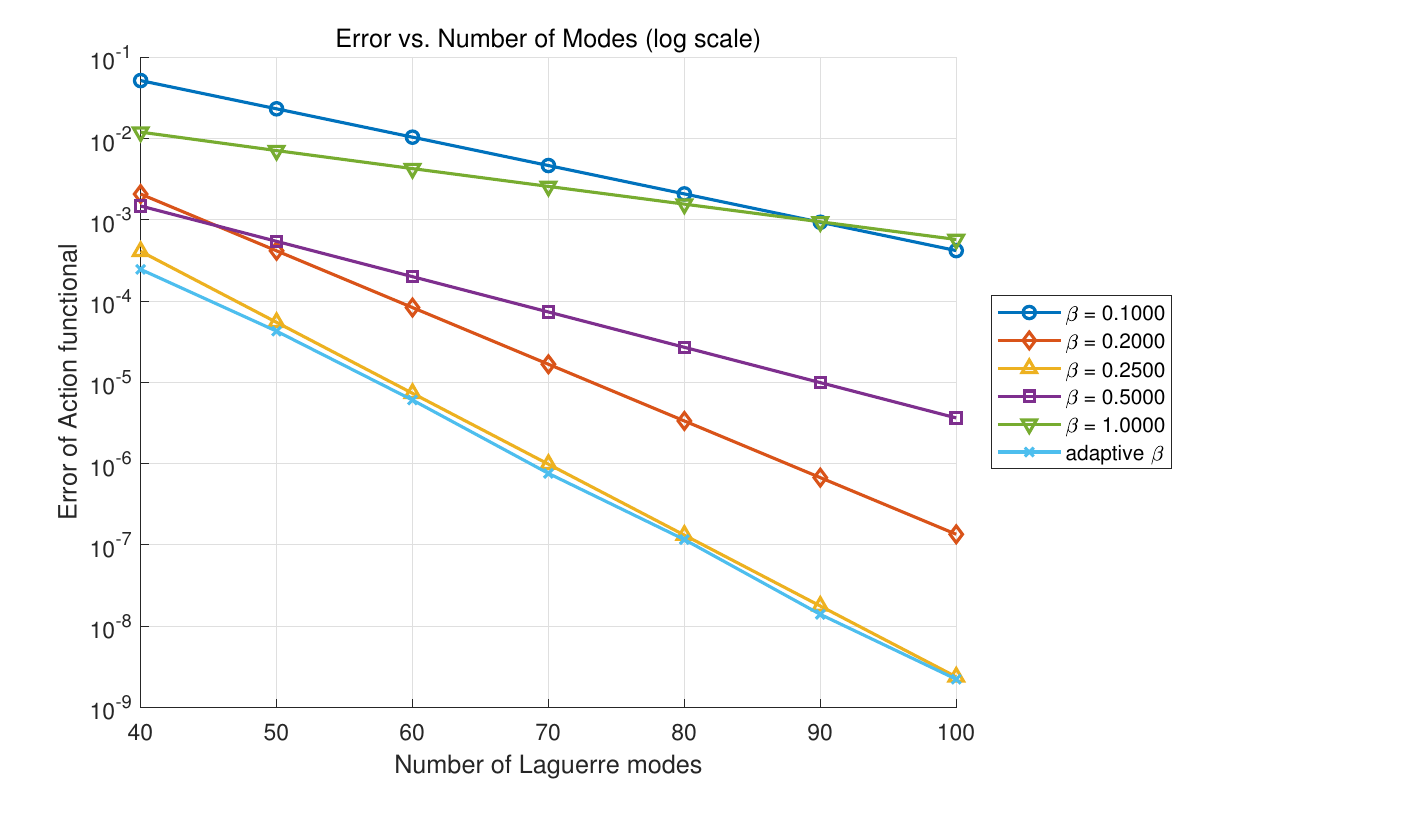}
\caption{The LMAM convergence of action functional for linear gradient example \eqref{eq:exam-6.1}. }
\label{linearfig} 
\end{figure}


\subsection{Nonlinear example}
Now we assess the performance of LMAM for a nonlinear gradient systems:
\begin{equation}\label{eq:ex6.2-nonlinear-grad-system}
    \begin{cases}
		\mathd x = -\partial_x V(x,y) \mathd t + \sqrt{\epsilon}\,\mathd W_t^x, \\
		\mathd y = -\partial_y V(x,y) \mathd t + \sqrt{\epsilon}\,\mathd W_t^y.
	\end{cases}
\end{equation}
where $W^X_t$ and $W^Y_t$ are independent Brownian motion processes, and the potential function $V(\cdot)$ is
\begin{equation}
	\label{app:example1}
	V(x,y) = \frac{1}{4}x^2 + \frac{1}{2}y^2 + xy^2 + x^2y.
\end{equation}
	The system admits a stable equilibrium at $a_1=(0,0)$.
We choose $a_2=(1,1)$ as the terminal point, which is not a critical point of the drift field.
This setting is particularly suitable for validation, since for gradient systems the quasi-potential
difference satisfies
\[
V(a_1,a_2) = 2\bigl(V(a_2)-V(a_1)\bigr),
\]
providing a reliable reference value for assessing the accuracy of the computed minimum action.
 The computational errors of the minimum action functional and their corresponding 
	 trajectory errors are shown in Figure \ref{fignonlinear}.

\begin{figure}
\centering
\includegraphics[width=0.9\textwidth]{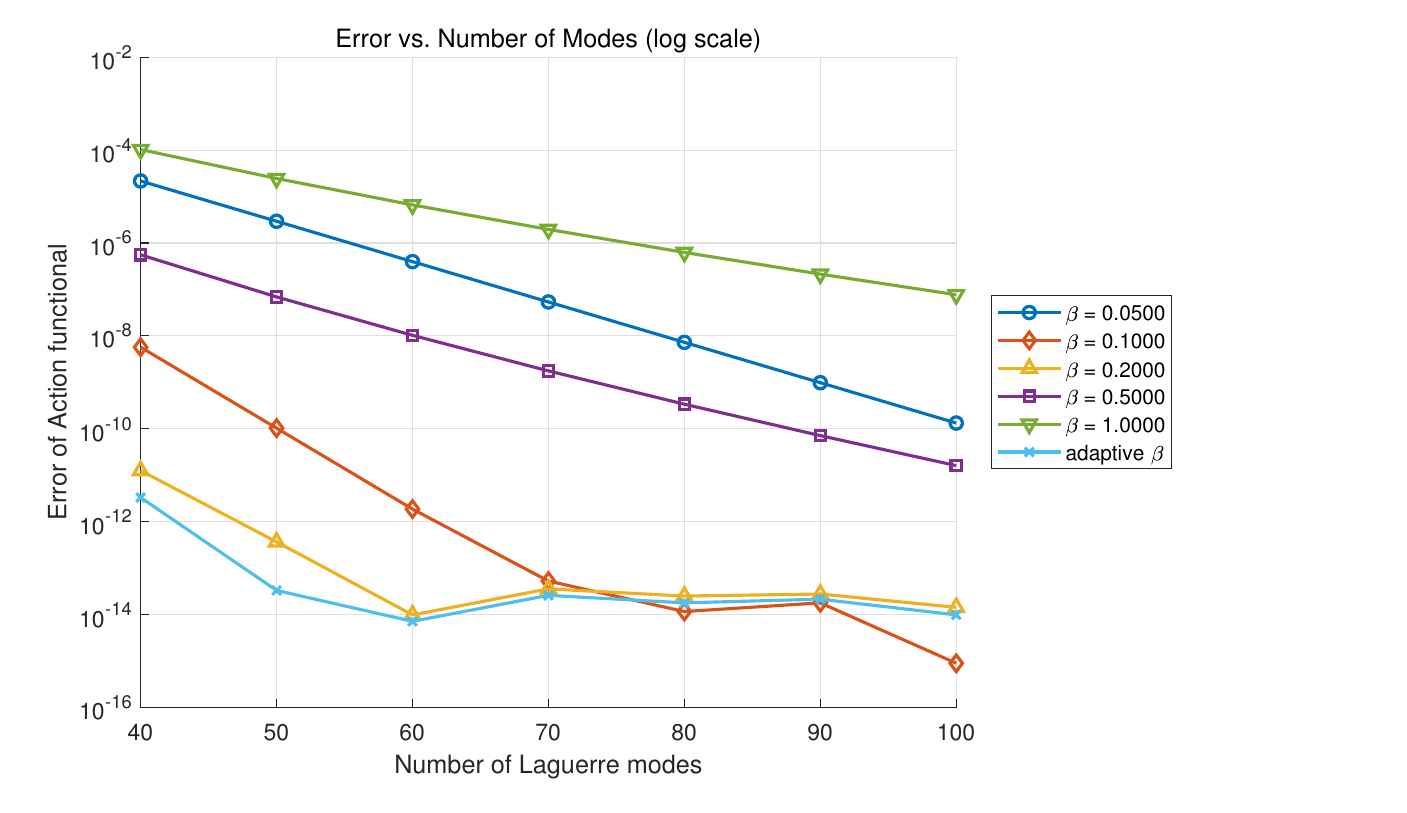}
\caption{The LMAM convergence of action functional for nonlinear gradient example~\eqref{eq:ex6.2-nonlinear-grad-system}.}
\label{fignonlinear} 
\end{figure}

From Figure~\ref{fignonlinear}, we observe again that the error of the action functional decreases geometrically fast as the number of Laguerre modes increases before reaching machine accuracy. And the adaptive $\beta$ strategy consistently achieves the smallest errors 
within machine accuracy over the tested range of $N$.

\subsection{LMAM for Allen--Cahn equation}
We next apply the proposed LMAM to a spatially extended stochastic partial differential
equation.
This example serves as an intermediate benchmark between low-dimensional ODE systems and
high-dimensional fluid models, and is particularly suitable for assessing the performance of
LMAM in infinite-time transition problems arising from gradient flows.
We consider the one-dimensional Allen--Cahn equation (\cite{allen1979microscopic})
\begin{equation}
\partial_t u - \epsilon \Delta u + \frac{1}{\epsilon}(u^3 - u) = 0,
\qquad x \in [-1,1], \; t>0,
\label{eq:allen_cahn}
\end{equation}
subject to homogeneous Dirichlet boundary conditions
\begin{equation}
    \label{eq:Allen-Chan-BC}
u(-1,t)=u(1,t)=0,
\end{equation}
and an initial condition $u(x,0)=u_0(x)$.
This equation can be interpreted as the $L^2$-gradient flow of the Ginzburg--Landau energy
\begin{equation}
E(u)
=
\frac12
\int_{-1}^{1}
\bigl(
\epsilon |\nabla u|^2
+
\frac{1}{\epsilon}(u^2-1)^2
\bigr)\,dx.
\label{eq:ac_energy}
\end{equation}

When subjected to small stochastic perturbations, the Allen--Cahn equation exhibits rare transitions
between metastable states corresponding to local minima of the energy functional.

We apply LMAM to the Allen--Cahn equation. The spatial variable is discretized by Legnedre-spectral method~\cite{Shen1994EfficientSpectralGalerkin,WangYu2019EnergystableSecondorder}. The convergences of the action are shown in Figure \ref{figure:action2ACMy}, for which we see that adaptive scaling algorithm obtains better results than using fixed scaling factor.
\begin{figure}
\centering
\includegraphics[width=0.6\textwidth]{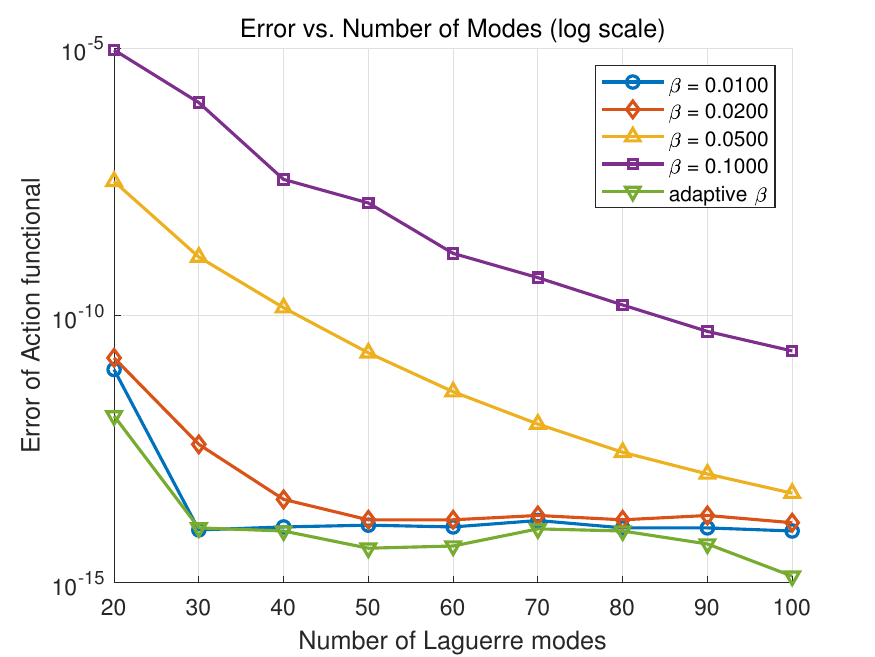}
\caption{The LMAM convergence of action functional for the Allen-Cahn problem \eqref{eq:allen_cahn}-\eqref{eq:Allen-Chan-BC}.}
\label{figure:action2ACMy}
\end{figure}

To further illustrate the structure of the quasi-potential, we project the dynamics onto the first two
dominant eigenmodes of the linearized Allen--Cahn operator around the stable equilibrium. Figure~\ref{figure:action2AC} visualizes the quasi-potential in this reduced coordinate system.
The resulting energy landscape reveals an asymmetric structure along the first eigendirection, denoted by $x_1$, reflecting
the nontrivial interaction between spatial modes induced by the nonlinear term, while the second eigen-direction, denoted by $x_2$ exhibits a more symmetric profile.
\begin{figure}
\centering
\includegraphics[width=0.48\textwidth]{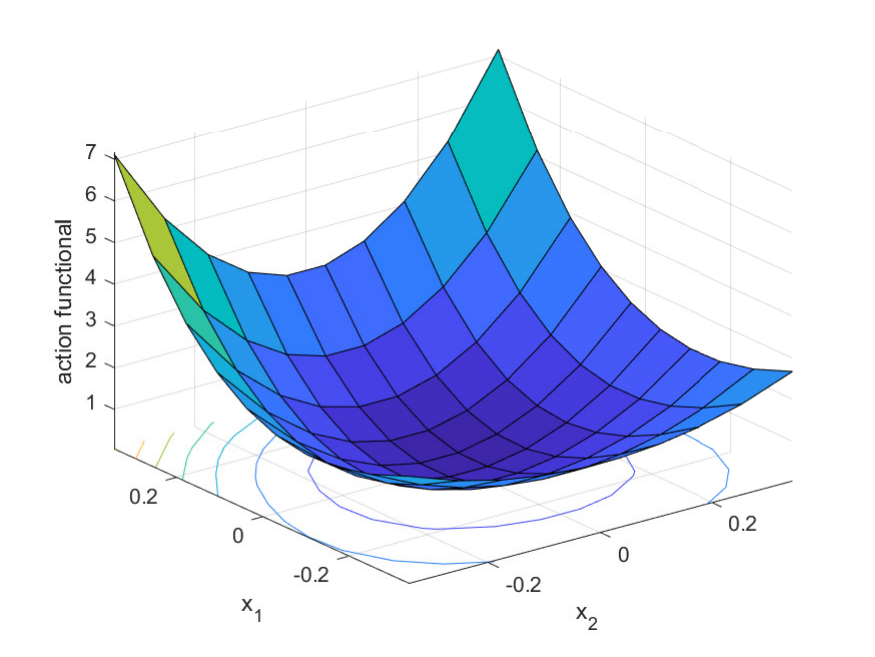}
\hfill
\includegraphics[width=0.48\textwidth]{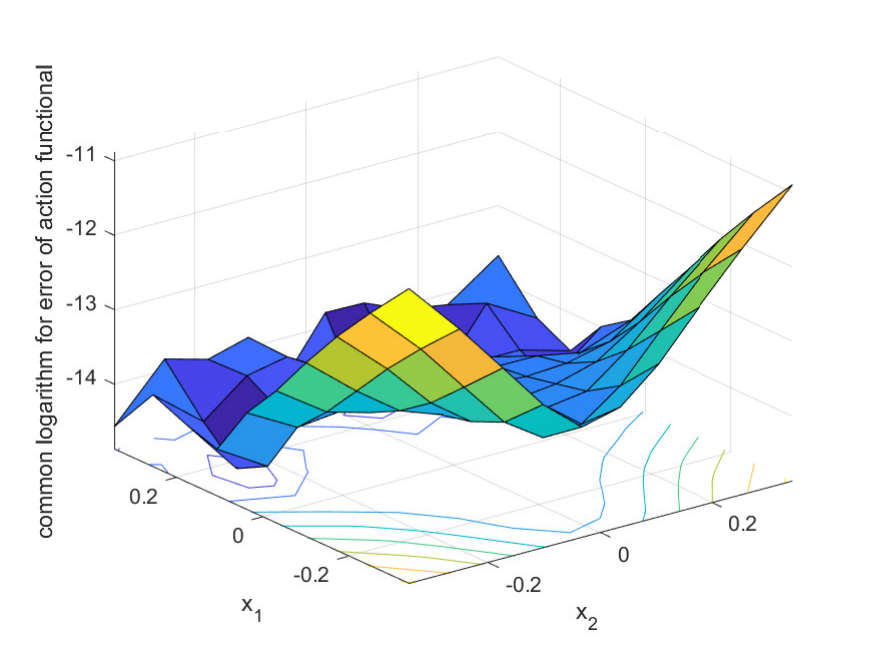}
\caption{The action functional and its error $\epsilon = 0.1$: left: the action functional, right: the error of action functional.}
\label{figure:action2AC}
\end{figure}

\subsection{LMAM for 2-D Navier--Stokes equation}
We finally apply the proposed LMAM to the two-dimensional incompressible Navier--Stokes
equations (\cite{temam2024navier}).
This example constitutes a stringent test for minimum action methods, as it involves a
high-dimensional, non-gradient dynamical system arising from a fundamental model in fluid
mechanics.

We consider the two-dimensional incompressible Navier--Stokes equations defined on the channel
domain $D = [0,L_x] \times [-1,1]$, given by
\begin{equation}
\left\{
\begin{aligned}
\frac{\partial u}{\partial t} + (u_{\mathrm{tot}}\cdot\nabla)u_{\mathrm{tot}}
&= -\nabla p + \frac{1}{\mathrm{Re}} \Delta u, \\
\nabla\cdot u &= 0.
\end{aligned}
\right.
\label{eq:ns}
\end{equation}
where $u_{\mathrm{tot}} = u_b + u$ denotes the total velocity, $u_b$ is a prescribed base flow,
$p$ is the pressure, and $\mathrm{Re}$ is the Reynolds number.
 We consider the Dirichlet boundary condition for $y-$direction
 and periodic boundary condition for $x-$direction:
 \begin{equation} \label{eq:ns-bc}
    \mathbf{u}|_{y=\pm1}=u_{\pm},\ x\in [0,L_x], 
    \qquad
    \mathbf{u}|_{x=0}=\mathbf{u} |_{x=L_x},\ y\in [-1,1].
 \end{equation}
To apply LMAM, we follow the dynamic-solver-consistent framework proposed in
\cite{wan_dynamic_2017, moser1983spectral} to discretize the Navier--Stokes equations in space.
Specifically, to eliminate the pressure variable and enforce incompressibility at the discrete level,
we adopt a divergence-free formulation of the Navier--Stokes equations.
\begin{equation}
	V =
	\bigl\{
	\mathbf{u}\in L^{2}(D)\;
    \mid
    \;
	\nabla\cdot\mathbf{u}=0,\;
	\mathbf{u}|_{y=\pm1}=0,\;
	\mathbf{u}|_{x=0}=\mathbf{u}|_{x=L_x}
	\bigr\},
	\label{space}
\end{equation}
which incorporates the incompressibility constraint and the boundary conditions directly.

Testing the Navier--Stokes equations with $\mathbf{v}\in V$ yields the weak formulation
\begin{equation}
	\left\langle \frac{\partial\mathbf{u}}{\partial t},\mathbf{v}\right\rangle _{D}+\left\langle (\mathbf{u}_{tot,h}\cdot\nabla)\mathbf{u}_{tot,h},\mathbf{v}\right\rangle _{D}=-\left\langle \nabla p,\mathbf{v}\right\rangle_D +\frac{1}{\mathrm{Re}}\left\langle \Delta\mathbf{u},\mathbf{v}\right\rangle _{D}+\left\langle \mathbf{f},\mathbf{v}\right\rangle_D, \quad \mathbf{v} \in V. \label{eq:inn}
\end{equation}

Since $\mathbf{v}$ is divergence-free, the pressure term vanishes,
\begin{equation}
	\left\langle \nabla p,\mathbf{v}\right\rangle_D
	=
	\left\langle p,\nabla\cdot\mathbf{v}\right\rangle_D
	=
	0,
	\label{eq:div}
\end{equation}
and we obtain the pressure-free weak formulation
\begin{equation}
	\left\langle \frac{\partial\mathbf{u}}{\partial t},\mathbf{v}\right\rangle _{D}
	+
	\left\langle (\mathbf{u}_{\mathrm{tot}}\cdot\nabla)\mathbf{u}_{\mathrm{tot}},\mathbf{v}\right\rangle _{D}
	+
	\frac{1}{\mathrm{Re}}\left\langle \nabla\mathbf{u},\nabla\mathbf{v}\right\rangle _{D}
	=
	\left\langle \mathbf{f},\mathbf{v}\right\rangle_D,
	\quad \mathbf{v}\in V.
	\label{eq:divNS}
\end{equation}
Following the dynamic-solver-consistent approach of   \cite{wan_dynamic_2017}, we construct a finite-dimensional divergence-free approximation space by combining Fourier modes in the periodic $x$-direction with polynomial basis functions in the wall-normal $y$-direction.
Specifically, for each Fourier wave number $k$, we define
\[
e_k(x)=\exp\Bigl(i\frac{2\pi k}{L_x}x\Bigr),
\]
which reflects the periodic boundary condition in the streamwise direction. 
To enforce both the no-slip boundary conditions at $y=\pm1$ and the divergence-free constraint, we introduce vector-valued wall-normal basis functions $\overrightarrow{\phi}^{\,k}_m(y)$.
Let $P_m(y)$ denote the Legendre polynomial of degree $m$,
$\varphi_m(y)= P_{m}(y)-P_{m+2}(y)$.
The wall-normal basis functions are defined as follows.

For $k=0$,
\begin{equation}
\overrightarrow{\phi}_{m}^{0}(y)
=
\begin{pmatrix}
\varphi_m(y) \\[1pt]
0
\end{pmatrix},
\quad m\ge 0.
\label{base-y-0}
\end{equation}

For $k\neq 0$,
\begin{equation}
\overrightarrow{\phi}_{m}^{k}(y)
=
\begin{pmatrix}
\varphi_m(y)\\[6pt]
\dfrac{i2\pi k}{L_x}\!\left(\dfrac{\varphi_{m-1}(y)}{2m+1}
-\dfrac{\varphi_{m+1}(y)}{2m+5}\right)
\end{pmatrix},
\quad m\ge 1.
\label{base-y-1}
\end{equation}

Using these wall-normal functions, we define the divergence-free basis functions
\[
\mathbf{v}_{km}(x,y)=e_k(x)\,\overrightarrow{\phi}^{\,k}_m(y).
\]
The resulting basis functions satisfy the periodic boundary condition in $x$, the no-slip boundary conditions at $y=\pm1$, and the incompressibility constraint.

We then define the finite-dimensional approximation space by
\begin{equation}
	V_{h}
	=
	\Bigl\{
	\mathbf{v}\in L^{2}(D)\;\Big|\;
	\mathbf{v}
	=
	\sum_{|k|\leq N}\sum_{m=0}^{M}
	\alpha_{km}\,\mathbf{v}_{km}(x,y)
	\Bigr\},
	\label{space-fin}
\end{equation}
where $\{\alpha_{km}\}$ are the modal coefficients. By construction, $V_h\subset V$ is a finite-dimensional divergence-free subspace.

So we have the finite dimensional weak form,
\[
	\Bigl\langle \frac{\partial\mathbf{u}_h}{\partial t},\mathbf{v}_h\Bigr\rangle _{D}+\left\langle (\mathbf{u}_{tot,h}\cdot\nabla)\mathbf{u}_{tot,h},\mathbf{v}_h\right\rangle _{D}+\frac{1}{\mathrm{Re}}\left\langle \nabla\mathbf{u}_h,\nabla\mathbf{v}_h\right\rangle _{D}=\left\langle \mathbf{f},\mathbf{v}_h\right\rangle_D, \quad \mathbf{v}_h\in V_h.
\]
This step amounts to a spatial discretization of the Navier--Stokes equations, which transforms the
PDE system into a high-dimensional system of ordinary differential equations for the modal
coefficients.

We then use Laguerre MAM to compute the MAPs and quasi-potentials for several Reynolds.
Convergence is assessed by monitoring both the $L^2(0,\infty)$ error of the computed MAPs and the
error of the associated action functional.
Figure~\ref{figure:NS100-500} reports the decay of the path error and action error with respect to the number of Laguerre
modes, where geometrical convergences are obtained, and the adaptive scaling algorithm produces the best results.
\begin{figure}
\centering
    \begin{subfigure}[b]{0.48\textwidth}
        \includegraphics[width=\textwidth]{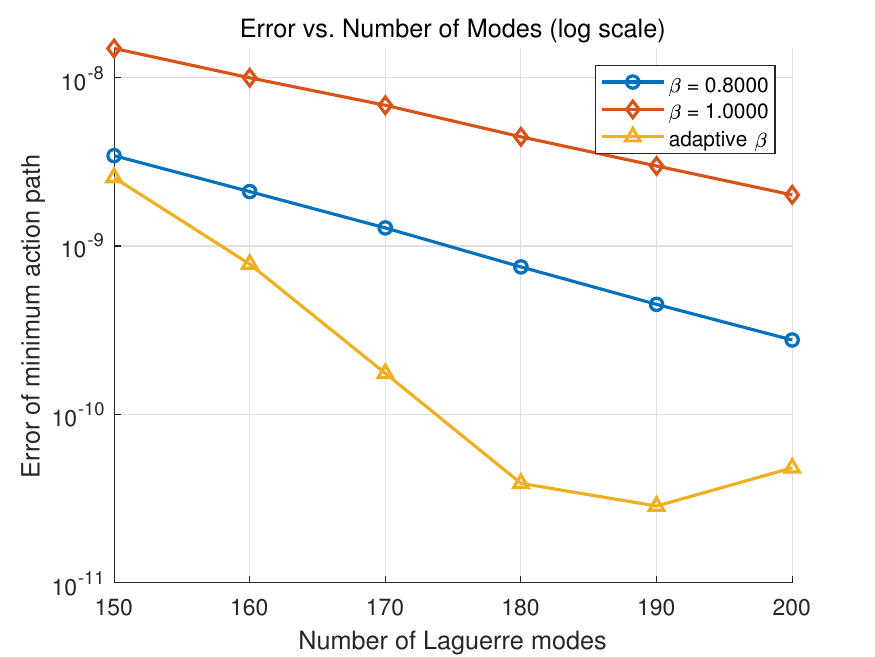}
        \caption{Error of MAPs, $\mathrm{Re}=100$.}
        \label{fig:NS1}
    \end{subfigure}
    \hfill
    \begin{subfigure}[b]{0.48\textwidth}
        \includegraphics[width=\textwidth]{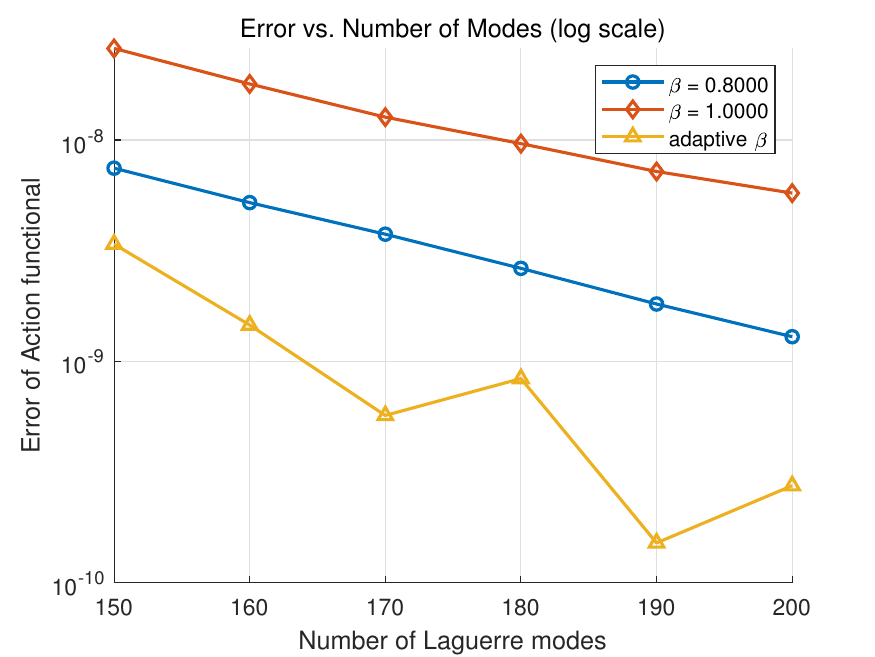}
        \caption{Error of quasi-potential, $\mathrm{Re}=500$.}
        \label{fig:NS2}
    \end{subfigure}    
\caption{The error of MAP and quasi-potential of LMAM for the Navier-Stokes equations.}
\label{figure:NS100-500}
\end{figure}


To gain further insight into the structure of the quasi-potential, we analyze the linearized
Navier--Stokes operator around the base flow and compute its leading eigenmodes. By projecting the dynamics onto the subspace spanned by the two most unstable eigenmodes (presented in Figure~\ref{figure:NS1-4}), we visualize the quasi-potential in this reduced coordinate system in Figure~\ref{figure:NSRe}.
\begin{figure} 
\centering \includegraphics[width=0.48\textwidth]{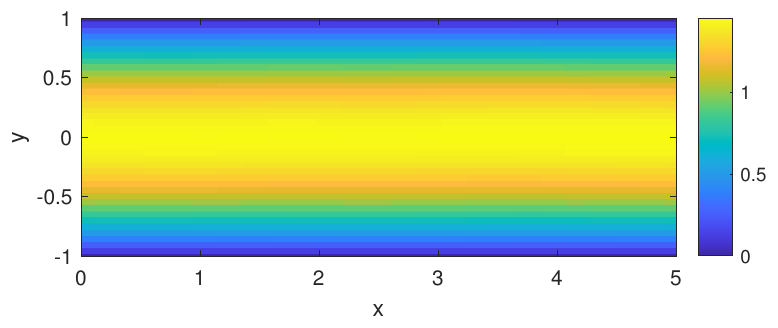}\ \includegraphics[width=0.48\textwidth]{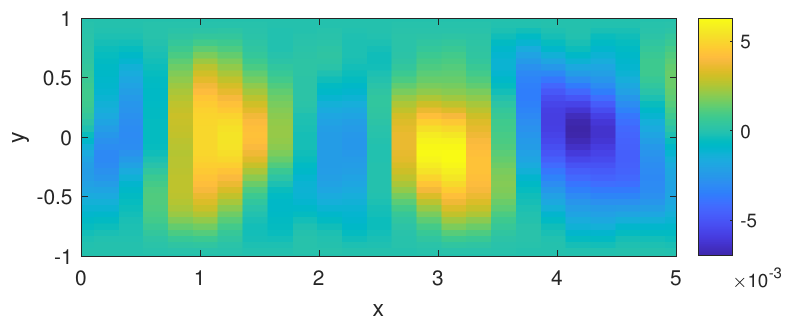}\\ \includegraphics[width=0.48\textwidth]{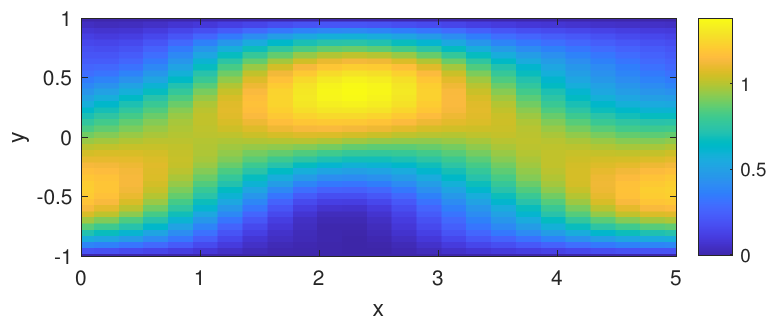}\ \includegraphics[width=0.48\textwidth]{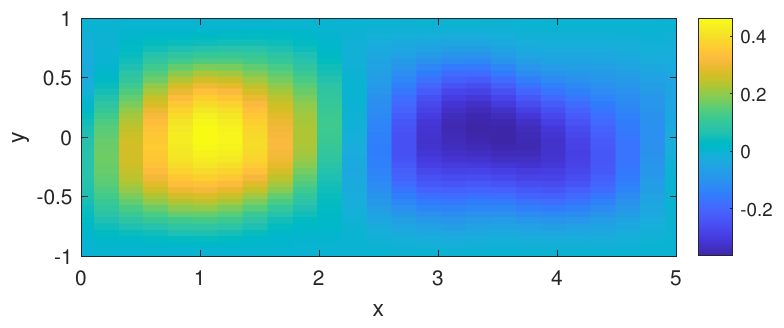} 
\caption{Plots of the two eigemodes $u_1$ and $u_2$ corresponding to the two smallest eigenvalues of the linearized operator around the base solution $u_b$. 
The top row shows the two components of $u_1$, and the bottom row shows the two components of $u_2$. 
The left column represents the $x$-component of the velocity field, while the right column represents the $y$-component.}
\label{figure:NS1-4} 
\end{figure}

\begin{figure}
\centering
\begin{minipage}{0.42\textwidth}
    \centering
    \includegraphics[width=\textwidth]{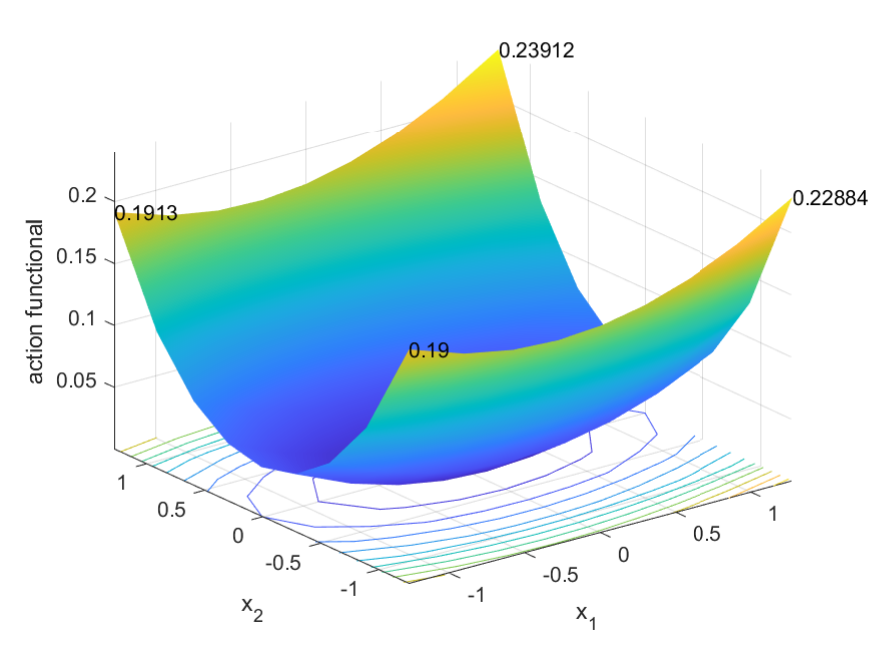}
    \par\vspace{1mm}
    \small (a) $Re=100$
\end{minipage}
\hspace{0.005\textwidth}
\begin{minipage}{0.42\textwidth}
    \centering
    \includegraphics[width=\textwidth]{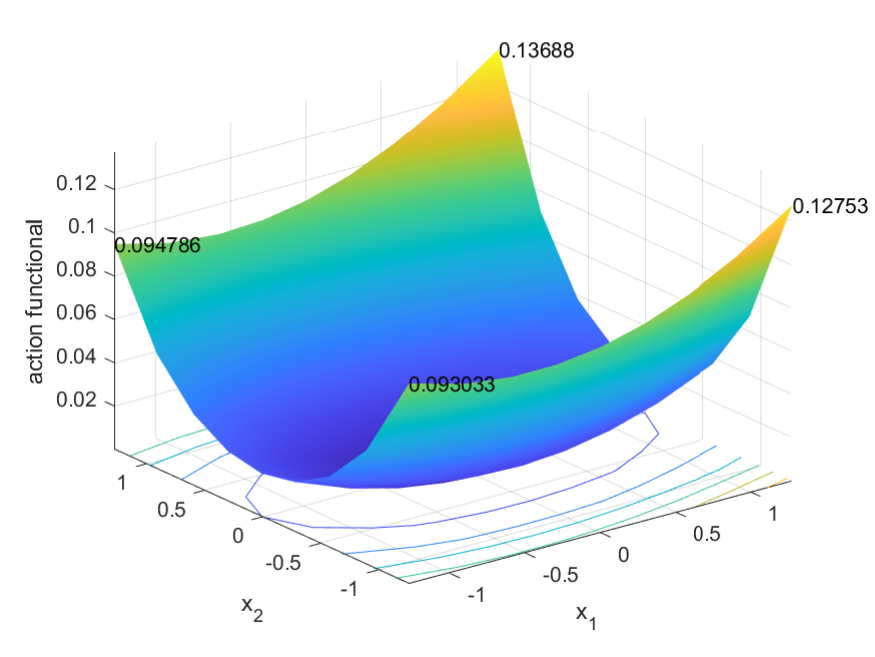}
    \par\vspace{1mm}
    \small (b) $Re=200$
\end{minipage}

\vspace{1mm}

\begin{minipage}{0.42\textwidth}
    \centering
    \includegraphics[width=\textwidth]{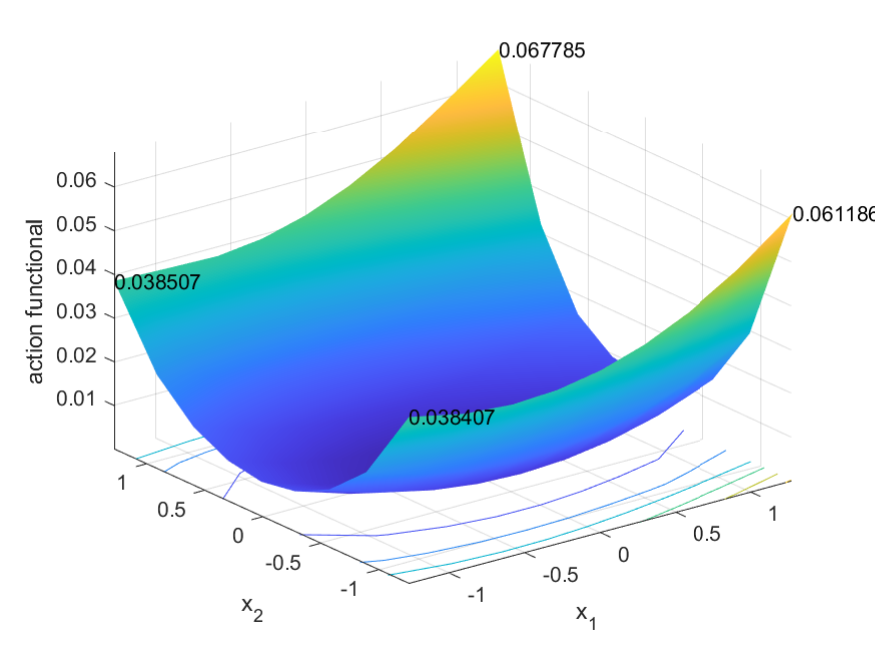}
    \par\vspace{1mm}
    \small (c) $Re=500$
\end{minipage}
\hspace{0.005\textwidth}
\begin{minipage}{0.42\textwidth}
    \centering
    \includegraphics[width=\textwidth]{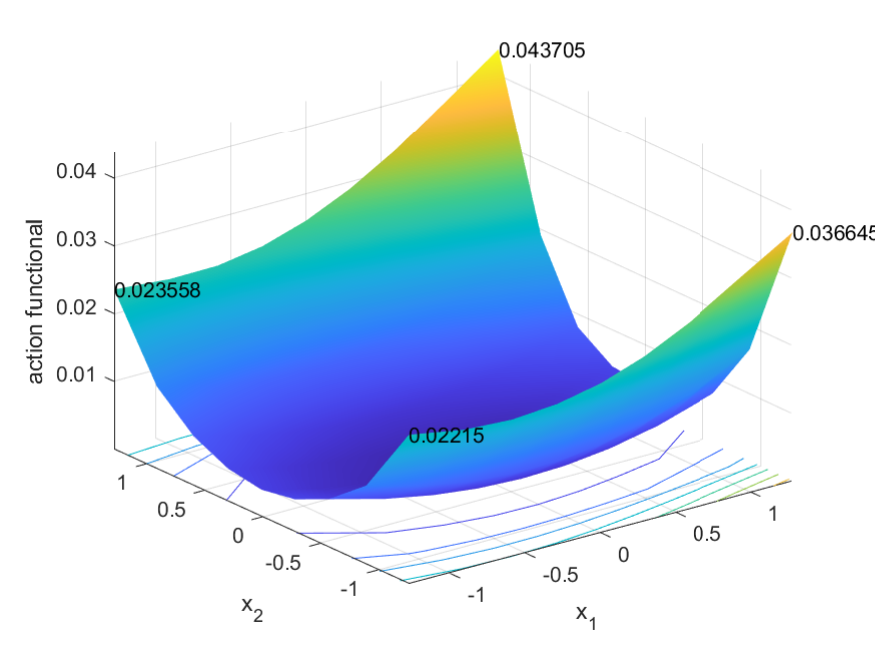}
    \par\vspace{1mm}
    \small (d) $Re=1000$
\end{minipage}
\caption{Visualization of the quasi-potential around the laminar base flow $\mathbf{u}_b$ in the two-dimensional subspace spanned by $\mathbf{u}_1$ and $\mathbf{u}_2$, parameterized by perturbations of the form $\mathbf{u}_b + x_1\mathbf{u}_1 + x_2\mathbf{u}_2$ at different Reynolds number. }
\label{figure:NSRe}
\end{figure}
We can apply the LMAM to the linearized Navier--Stokes equation:
\begin{equation}
	\frac{\partial\mathbf{u}}{\partial t}+(\mathbf{u}_{b}\cdot\nabla)\mathbf{u} + (\mathbf{u}\cdot\nabla)\mathbf{u}_b=-\nabla p+\frac{1}{\mathrm{Re}}\Delta\mathbf{u}\label{eq:linNS'},
\end{equation}
Using the same analysis, we figure out the quasi-potential to the linearized equation in  Figure \ref{figure:NSRelin}.
\begin{figure}
\centering
\begin{minipage}{0.42\textwidth}
    \centering
    \includegraphics[width=\textwidth,trim=8 8 8 8,clip]{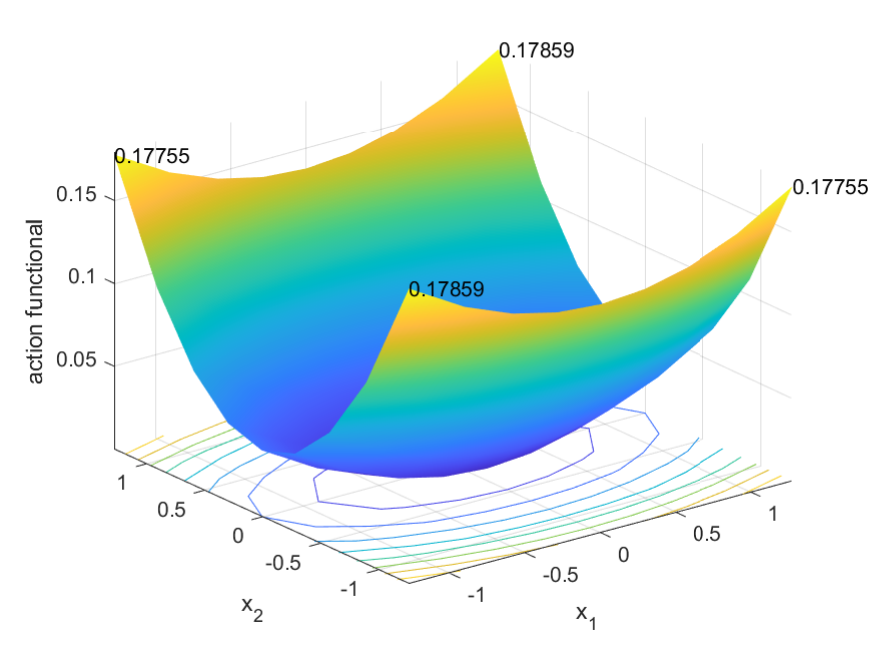}
    \par\vspace{1mm}
    \small (a) $Re=100$
\end{minipage}
\hspace{0.005\textwidth}
\begin{minipage}{0.42\textwidth}
    \centering
    \includegraphics[width=\textwidth,trim=8 8 8 8,clip]{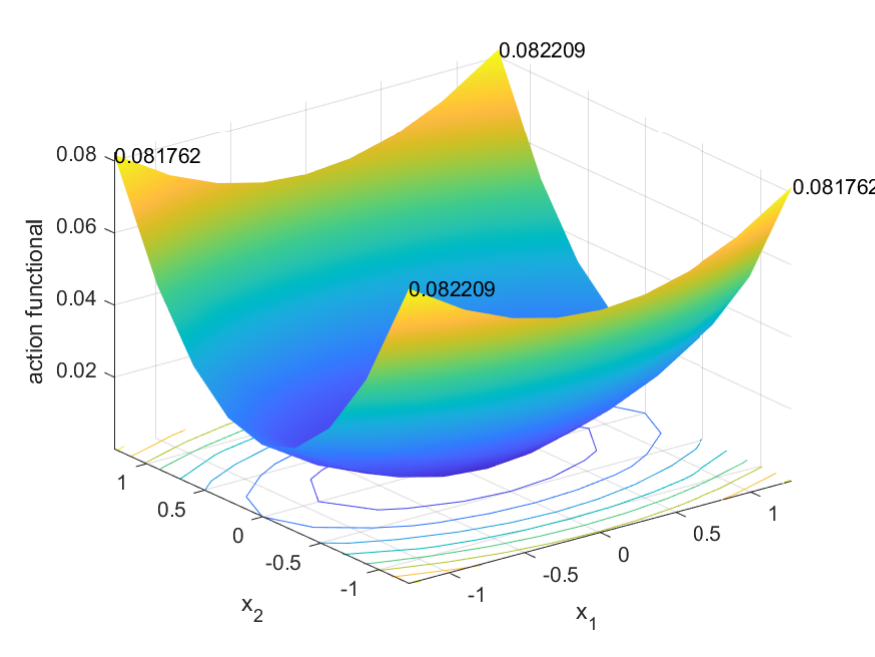}
    \par\vspace{1mm}
    \small (b) $Re=200$
\end{minipage}

\vspace{1mm}

\begin{minipage}{0.42\textwidth}
    \centering
    \includegraphics[width=\textwidth,trim=8 8 8 8,clip]{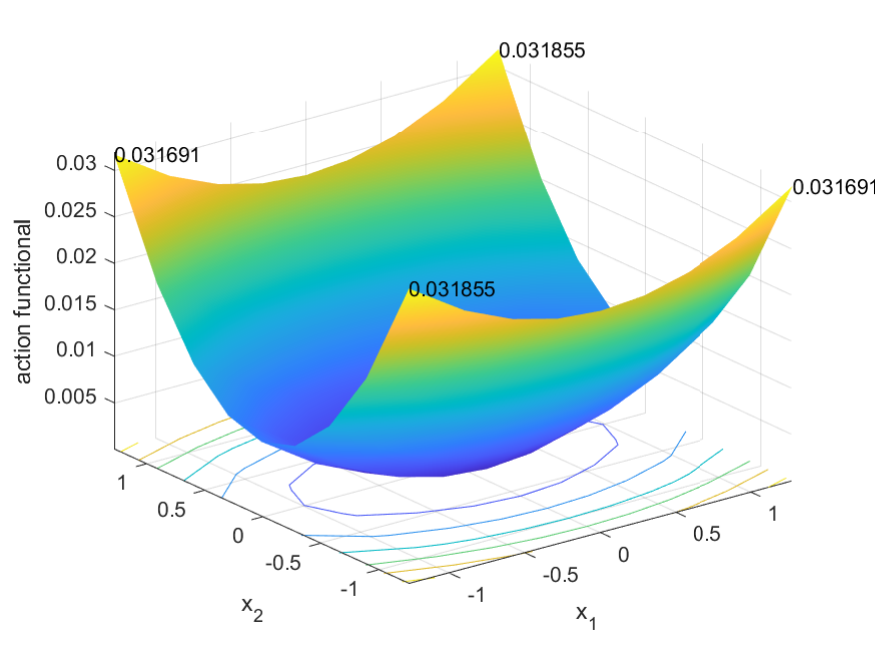}
    \par\vspace{1mm}
    \small (c) $Re=500$
\end{minipage}
\hspace{0.005\textwidth}
\begin{minipage}{0.42\textwidth}
    \centering
    \includegraphics[width=\textwidth,trim=8 8 8 8,clip]{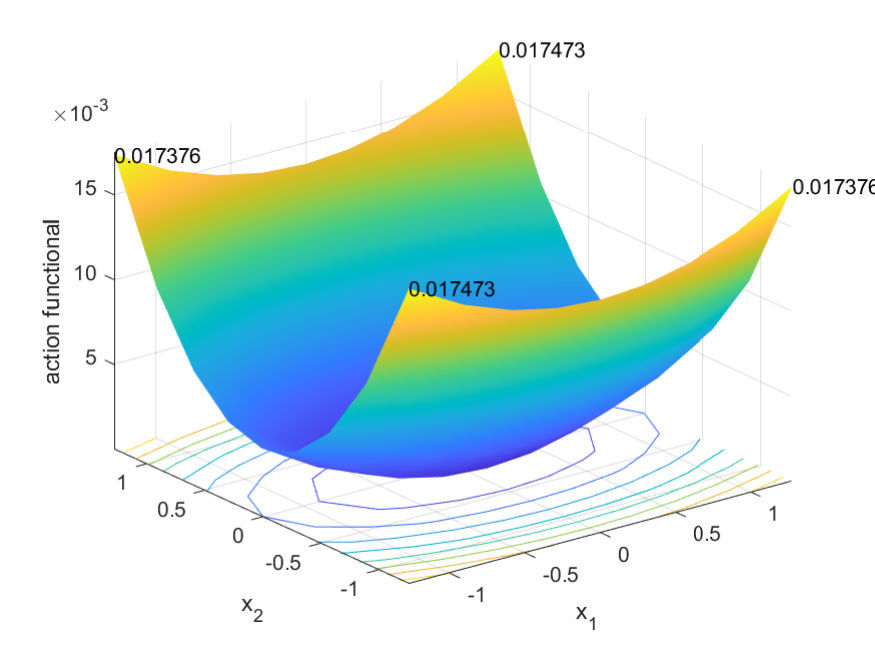}
    \par\vspace{1mm}
    \small (d) $Re=1000$
\end{minipage}
\caption{Visualization of the quasi-potential for the linearized Navier--Stokes equation around the laminar base flow $\mathbf{u}_b$ in the two-dimensional subspace spanned by $\mathbf{u}_1$ and $\mathbf{u}_2$, parameterized by perturbations of the form $\mathbf{u}_b + x_1\mathbf{u}_1 + x_2\mathbf{u}_2$ at different Reynolds number.}
\label{figure:NSRelin}
\end{figure}

From Figure~\ref{figure:NSRelin} and Figure~\ref{figure:NSRe}, we observe that for the linearized case, 
the quasi-potential is almost linearly inverse proportional to Reynolds number, being almost even function of both $x_1$ and $x_2$ coordinates. But, for the non-linear Navier-Stokes equations, even though the quasi-potential decreases when Reynolds number increases, but not in a inverse proportional way. Moreover, the quasi-potential is no longer an even function of $x_1$ and $x_2$, reflecting complicated nonlinear effects.

\section{Conclusion}
\label{conclusion}
We proposed an efficient Laguerre MAM for computing minimum-action paths and quasi-potentials in dynamical systems driven by small noise. By formulating the variational problem over a semi-infinite time interval and adopting Laguerre functions for temporal discretization, the method inherently resolves infinite-time transition trajectories without the need for artificial time truncation. For linear systems, we establish geometric convergence and show that the convergence rate can be effectively tuned by the time-scaling factor, which lays a solid theoretical foundation for the proposed adaptive scaling strategy. 

Extensive numerical experiments are conducted on both low-dimensional examples and spatially extended models, including the Allen--Cahn equation and the two-dimensional Navier--Stokes equations. These experiments comprehensively demonstrate the robustness and accuracy of the proposed Laguerre MAM, where adaptive time scaling significantly enhances convergence and reduces sensitivity to parameter selection.

Several directions warrant further investigation in future research. First, the current formulation is tailored to transitions involving a single metastable state; extending the framework to multi-stage transitions that traverse multiple critical points (e.g., through domain decomposition or  coupling multiple semi-infinite Laguerre expansions) would broaden its applicability. Second, for high-dimensional PDE discretizations the resulting optimization problem can be severely ill-conditioned; developing scalable preconditioners and fast solvers that leverage the inherent spectral structure (including fully diagonalized variants) is crucial for enabling large-scale practical applications. Third, it would be valuable to deepen the theoretical understanding of genuinely nonlinear systems, including the derivation of sharper a priori/a posteriori error estimates and rigorous guarantees for the adaptive scaling update rule. 


\section*{Acknowledgement}
We would like to thank Prof. Jie Shen and Huiyuan Li for helpful discussions on
Laguerre spectral methods. This work was supported by the National Natural Science Foundation of China under Grant No. 12494543, 92370205, 12171467,
Strategic Priority Research Program of Chinese Academy of Sciences under grant XDA0480504. 
The computations were partially performed on the LSSC4 PC cluster of State Key Laboratory of Scientific and Engineering Computing.

\bibliographystyle{cas-model2-names}
\bibliography{refs}
\appendix

\end{document}